  \newcommand{\wt}{\widetilde}
  \newcommand{\gdeg}{G\text{\rm -deg}}
  \newcommand{\vp}{\varphi}
  \newcommand{\ve}{\varepsilon}
  \DeclareMathOperator{\id}{Id}
  \def\bn{\mathbb N}
\def\br{\mathbb R}
\def\bz{\mathbb Z}
\def\mbdeg{\mbox{deg}}
  \newcommand{\vs}{\vskip .3cm}
  \newcommand{\amal}[5]{#1\prescript{#2}{}\times_{#3}^{#4}#5}
  \newcommand{\norm}[1]{\left\lVert#1\right\rVert}
  \newcommand\cV{\ensuremath{\mathcal V}}
  \newcommand\cW{\ensuremath{\mathcal W}}
 \newcommand\ff{\ensuremath{\mathfrak f}}
  \newcommand\bbR{\ensuremath{\mathbb R}}
  \newcommand\bbV{\ensuremath{\mathbb V}}
  \newcommand\bbZ{\ensuremath{\mathbb Z}}
  \newcommand\bfx{\ensuremath{\mathbf x}}
  \newcommand\bfV{\ensuremath{\mathbf V}}
  \newcommand\scrA{\ensuremath{\mathscr A}}
  \newcommand\scrE{\ensuremath{\mathscr E}}
  \newcommand\scrF{\ensuremath{\mathscr F}}
  \newcommand\scrH{\ensuremath{\mathscr H}}
  \newcommand\itv{\ensuremath{\mathit v}}
  \definecolor{mygreen}{rgb}{0,.66,.05}
  \definecolor{lightyellow}{rgb}{1,1,.80}
  \theoremstyle{plain}
  \newtheorem{theorem}{Theorem}[section]
  \newtheorem{proposition}[theorem]{Proposition}
  \newtheorem{lemma}[theorem]{Lemma}
  \theoremstyle{definition}
  \newtheorem{definition}[theorem]{Definition}
  \newtheorem{remark}[theorem]{Remark}
  \newcommand{\TheTitle}{Periodic Solutions to Reversible Second Order Autonomous Systems with Commensurate Delays}
\newcommand{\support}{This project is supported by the National Natural
Science Foundation of China (No.~11871171).}
\newcommand{\corresponding}{Corresponding author}
\newcommand{\AuthorA}{Zalman Balanov}
\newcommand{\AuthorB}{Fulai Chen\thanks{\corresponding} }
\newcommand{\AuthorC}{Jing Guo}
\newcommand{\AuthorD}{Wies{\l}aw Krawcewicz\thanks{\support}~}
\newcommand{\InstituteA}{Department of Mathematics, Xiangnan University, Xiangnan, Hunan, China, and  
  Department of Mathematical Sciences, the University of Texas at Dallas, Richardson, Texas 75080, USA.}
  \newcommand{\InstituteB}{Department of Mathematics, Xiangnan University, Chenzhou,, Hunan, China.}
    \newcommand{\InstituteC}{ Department of Mathematical Sciences, the University of Texas at Dallas, Richardson, Texas 75080, USA.}
  \newcommand{\InstituteD}{Applied Mathematics Center at Guangzhou University,
    Guangzhou 510006, China, and
    Department of Mathematical Sciences, the University of Texas at Dallas,
    Richardson, Texas 75080, USA.}
\newcommand{\EmailA}{\href{mailto:balanov@utdallas.edu}
{~~\tt balanov@utdallas.edu}}
\newcommand{\EmailB}{\href{mailto:cflmath@163.com}
{~~\tt cflmath@163.com}}
\newcommand{\EmailC}{\href{mailto:jig70@pitt.edu}
{~~\tt jig70@pitt.edu}}      
\newcommand{\EmailD}{\href{mailto:wieslaw@utallas.edu}
{~~\tt wieslaw@utallas.edu}}
  \title{\TheTitle}
  \author{
    \AuthorA\thanks{\InstituteA \EmailA},
    \AuthorB\thanks{\InstituteB \EmailB}, 
    \AuthorC\thanks{\InstituteC \EmailC} ~ and 
    \AuthorD\thanks{\InstituteD \EmailD}
   }
  \date{\today}
\begin{document}

  \maketitle
  
  \begin{abstract}
  Existence and spatio-temporal patterns of periodic solutions to second order reversible equivariant autonomous systems with commensurate delays
  are studied using the Brouwer $O(2) \times \Gamma \times \mathbb Z_2$-equivariant degree theory, where
  $O(2)$ is related to the reversing symmetry, $\Gamma$ reflects the
  symmetric character of the coupling in the corresponding network and $\mathbb Z_2$ is related to the oddness of the right-hand-side. Abstract results are supported by a concrete example with $\Gamma = D_6$ -- the dihedral group of order 12. 
  \end{abstract}

\noindent  
{\it 2010 AMS Mathematics Subject Classification:} 34K13, 37J45, 39A23, 37C80, 47H11.

\noindent
{\it Key Words:} Second order delay-differential equations, periodic solutions,  commensurate delays, Brouwer equivariant degree, Burnside ring, reversible systems.
  	
  \section{Introduction}\label{sec:introduction}
  
\

{\bf (a) Subject and goal.}   Existence of periodic solutions to equivariant dynamical systems together with  describing their spatio-temporal symmetries constitute an important problem of equivariant dynamics (see, for example, \cite{GolSchSt,GolStew} for the equivariant singularity theory based methods and \cite{AED,survey,IV-book} for the equivariant degree treatment). As is well-known, second order systems of ODEs with no friction term exhibit an extra symmetry -- the so-called reversing symmetry, i.e. if $x(t)$ is a solution to the system, then so is $x(-t)$. We refer to \cite{Lamb-Roberts} for a comprehensive exposition of (equivariant) reversible ODEs as well as their applications in natural sciences (see also \cite{AS}). It should be stressed that in the context relevant to spatio-temporal symmetries of periodic solutions, reversing symmetry gives rise to extra subgroups of the non-abelian group $O(2)$.

Simple examples show that, in contrast to their ODEs counterparts, second order delay differential equations (in short, DDEs) with no friction term are not reversible, in general. In \cite{BW} (see also \cite{KW}), we considered  {\it space reversible} equivariant mixed DDEs of the form
\begin{align}
\label{eq:dde_system}
\ddot v(y)=g(\alpha, v(y))+a(v(y-\alpha)+v(y+\alpha)), \quad a,\alpha \in\br ,
\end{align}
 with equivariant $g : \mathbb R^n \to \mathbb R^n$ 
(one can think of equations governing steady-state
solutions to PDEs, cf.~\cite{Lamb-Roberts} and references therein). Note that by replacing $y$ by $t$ in \eqref{eq:dde_system}, 
one obtains {\it  time-reversible} DDEs. However, such systems involve using the information from the future by ``traveling back in time'', which is 
difficult to justify from a commonsensical viewpoint.   
  
Time delay systems with {\it commensurate delays} play an important role in robust control theory (see, for example, \cite{GuoKharitonovCheng} and references therein). A class of such systems exhibiting a reversal symmetry is the main {\it subject} of the present paper. To be more specific,  given $p>0$, we are interested in the periodic problem
\begin{align}\label{weq}
\left\{  \begin{aligned}
 \ddot{x}(t)&=f\left(x(t),x\left(t-\frac{p}{m}\right),\dots,x\left(t-(m-1)\frac{p}{m}\right)\right),\quad t\in\bbR,\;x(t)\in\mathbf{V} = \mathbb R^n,\\
 x(t)&=x(t+p),\quad\dot{x}=\dot{x}(t+p)
 \end{aligned}\right.
 \end{align}
 under the following assumption providing the time reversibility of system \eqref{weq}:

\medskip
 \begin{itemize}
\item[(R)] $f(x,y^1,y^2, \cdots, y^{m-2},y^{m-1})=f(x,y^{m-1},y^{m-2}, \cdots, y^2, y^1)$ 
for all  $(x,y^1,\cdots,y^{m-1})\in \bfV^m$
\end{itemize}

Assume, in addition, that $\Gamma$ is a finite group and $\bfV$ is an orthogonal $\Gamma$-representation ($\Gamma$ acts on $\bfV = \bbR^n$ by permuting the vector coordinates in $\bbR^n$). 
Put $\bfx:=(x,y^1,\cdots,y^{m-1})\in \bfV^m$ and define on $\bfV^m$ the diagonal $\Gamma$-action by 
$\gamma \bfx := (\gamma x, \gamma y^1,...,\gamma y^{m-1})$. We make the following assumptions: 
 \begin{enumerate}[label=($A_\arabic*$)]
  	\item\label{c1} $f$ is $\Gamma$-equivariant, i.e., $f$ is continuous and $f(\gamma \bfx)=\gamma f(\bfx)$ for all $\gamma\in\Gamma$ and $\bfx\in \bfV^m$;
  	\item\label{c2} $f$ is odd, i.e., $f(-\bfx)=-f(\bfx)$, for all $\bfx\in \bfV^m$;
	\item\label{c3} $\exists R>0\; \forall \bfx \quad |x|>R, \mid y^j|\leq |x| \Rightarrow x\bullet f(\bfx)>0 $;
	\item\label{c5} The derivative $A:=Df(0)=[A_0,A_1,\dots, A_{m-1}]$ exists and $A_jA_s =A_sA_j$ for $j,s =0,1,\dots, m-1$.
  \end{enumerate}

\noindent
The {\it goal} of the present paper is to study the existence and spatio-temporal properties of solutions to problem \eqref{weq} under the assumptions (R), \ref{c1}--\ref{c5}.

 \vs 

 {\bf (b) Method.} Observe that given an  orthogonal $G$-representation  $V$ (here  $G$ stands for a compact Lie group) and an admissible $G$-pair  $(f,\Omega)$  in $V$
(i.e. $\Omega\subset V$ is an open bounded $G$-invariant set and $f:V\to V$ is a $G$-equivariant map without zeros on $\partial \Omega$), the Brouwer degree $d_H:=\deg(f^H,\Omega^H)$ is 
well-defined for any $H \le  G$ (here $\Omega^H:= \{x \in \Omega\, :\, hx = x\; \forall h \in H\}$ 
and $f^H:= f|_{\Omega^H}$). If for some $H$, one has $d_H\not=0$, then the existence of solutions with symmetry at least $H$ to equation $f(x)=0$ in $\Omega$, can be predicted. Although this approach provides a  way to determine the existence of solutions in $\Omega$, and even to distinguish their different orbit types, nevertheless, it comes at a price of elaborate $H$-fixed-point space computations which can be a rather challenging task.

Our method is based on the usage of the Brouwer equivariant degree theory; 
 for the detailed exposition of this theory, we refer to the monographs
 \cite{AED, KW,IV-book,KB} and survey \cite{survey} (see also \cite{BKLN,BLN}). 
In short, the equivariant degree is a topological tool allowing ``counting'' orbits of solutions to symmetric equations in the same way as the usual Brouwer degree does, but according to their symmetry properties. 

To be more explicit, the equivariant degree $\gdeg(f,\Omega)$ is an element of the free $\bz$-module $A(G)$ generated by the conjugacy classes $(H)$ of subgroups $H$ of $G$ with a finite Weyl group $W(H)$:
\begin{equation}\label{eq:gdeg}
\gdeg(f,\Omega)=\sum_{(H)} n_H\, (H), \quad n_H\in \bz,
\end{equation} 
where the coefficients $n_H$ are given by the following Recurrence Formula
\begin{equation}\label{eq:rec}
n_H=\frac{d_H-\sum_{(L)>(H)} n_L \,n(H,L)\, |W(L)|}{|W(H)|},
\end{equation}
and  $n(H,L)$ denotes the number of subgroups $L'$ in $(L)$ such that $H\le L'$ (see \cite{AED}).  One can immediately recognize a connection 
between the two collections: $\{d_H\}$ and  $\{ n_H\}$, where $H \le  G$ and $W(H)$ is finite. 
As a matter of fact,  $\gdeg(f,\Omega)$ satisfies the standard properties expected from any topological degree.  
However, there is one additional functorial property, which plays a crucial role in computations, namely the {\it multiplicativity property}. In fact, $A(G)$ has a natural structure of a ring  (which is called the {\it Burnside ring} of $G$), where the multiplication  $\cdot:A(G)\times A(G)\to A(G)$   is defined on generators by 
\begin{equation}\label{eq:mult}
(H)\cdot (K)=\sum_{(L)} m_L\, (L) \quad\quad (W(L) \text{ is finite}),
\end{equation}
where the integer $m_{L}$ represents the number of $(L)$-orbits contained in the space $G/H\times G/K$ equipped with the natural diagonal $G$-action.
The multiplicativity property for two admissible $G$-pairs $(f_1,\Omega_1)$ and   $(f_2,\Omega_2)$ means the following equality:
\begin{equation}\label{eq:mult-property}
 \gdeg(f_1\times f_2,\Omega_1\times \Omega_2)=  \gdeg(f_1,\Omega_1)\cdot  \gdeg(f_2,\Omega_2).
\end{equation}
Given a $G$-equivariant linear isomorphism $A : V \to V$, formula \eqref{eq:mult-property} combined with the equivariant spectral decomposition of $A$, reduces the computations of $\gdeg(A,B(V))$ to the computation of the so-called basic degrees $\deg_{\cV_k}$, 
which can  be `prefabricated'
in advance for any group $G$ (here $\deg_{\cV_k}:=\gdeg(-\id, B(\cV_k))$ with $\cV_k$ being an irreducible $ G$-representation and $B(X)$ stands for the unit ball in $X$).  ln many cases, the equivariant degree  based method can be easily assisted
by computer (its usage seems to be unavoidable for large symmetry groups).

  In the present paper, to establish the abstract results on the existence and symmetric properties of periodic solutions,
  we use the  $G$-equivariant Brouwer degree with $G:=O(2)\times \Gamma\times \bz_2$, where
  $O(2)$ is related to the reversing symmetry, $\Gamma$ reflects the
  symmetric character of the coupling in the corresponding network and $\mathbb Z_2$ is related to the oddness of $f$. We also present
  a concrete illustrating example with $\Gamma:= D_6$, where $D_6$ stands for the dihedral group of order 12. Our computations are essentially based
  on new group-theoretical computational algorithms, which were implemented in the specially created  GAP library by Hao-pin Wu (see \cite{Pin}).
  
 \vs

 {\bf (c) Overview.} After the Introduction, the paper is organized as follows. In Section \ref{sec:SODAS}, we establish a priori estimates for solutions to problem \eqref{eq2} in the space $C^2(S^1;\bfV)$. In Section \ref{sec:operator-reform}, we reformulate problem \eqref{eq2} as an 
 $O(2) \times \Gamma \times \mathbb Z_2$-equivariant fixed point problem  in $C^2(S^1;\bfV)$ and present an abstract equivariant degree based result.
 This result can be effectively applied to concrete symmetric systems only if a ``workable" formula for the degree associated can be elaborated. The latter is a subject of Section \ref{sec:degree-computation}, where we combine the multiplicativity property of the equivariant degree with appropriate  equivariant spectral data of the operators involved. Based on that, in Section \ref{sec:main-results-example}, we present our main results (see Theorems \ref{th:main1} and \ref{th:main2}) and an illustrating example with the dihedral group $\Gamma = D_6$.
 We conclude the paper with an Appendix related to the equivariant topology jargon and equivariant degree background. 

  \section{Normalization of Period and A Priori Bounds}\label{sec:SODAS}

  We start with standard steps of the degree theory treatment of (autonomous) dynamical systems: normalization of the period and establishing 
  a priori bounds. 
  \subsection{Normalization of period}
  By substituting $y(t)=x(\frac{pt}{2\pi})$, system \eqref{weq} is transformed to
    \begin{align*}
   \ddot{y}(t)&=\Big(\frac{p}{2\pi}\Big)^2\ddot{x}\Big(\frac{pt}{2\pi}\Big)\\
    &=\Big(\frac{p}{2\pi}\Big)^2  f\left(x\left(\frac{pt}{2\pi}\right),x\left(\frac{p}{2\pi}\left(t-\frac{2\pi}{m}\right)\right),\cdots,x\left(\frac{p}{2\pi}\left(t-\frac{2\pi}{m}(m-1)\right)\right)\right)\\
    &=\Big(\frac{p}{2\pi}\Big)^2  f\left(y(t),y\left(t-\frac{2\pi}{m}\right),\cdots,y\left(t-\frac{2\pi}{m}(m-1)\right)\right).
    \end{align*}
Put  $\tau:=\frac{2\pi}{m}$, so system \eqref{weq} can be written as
  \begin{align}\label{weqspf} 
  \begin{cases}
   \ddot{y}(t)=\alpha^2f(y(t),y(t-\tau),\dots,y(t-(m-1)\tau))\\
   y(0)=y(2\pi),\; \dot y(0)=\dot y(2\pi),
   \end{cases}
  \end{align}
where $\alpha:=\frac{p}{2\pi}$. Notice that $p$-periodic solutions $x(t)$ to system \eqref{weq} are in one-to-one correspondence to $2\pi$-periodic solutions $y(t)$ to system \eqref{weqspf}. In order to simplify the notation, replace $\alpha^2 f$ by $\mathfrak{f}$ and $y$ by $x$,  so one can represent system \eqref{weq} as follows:
\begin{equation}\label{eq2}
 \begin{cases}
    \ddot{x}(t)=\ff(\bfx_t), \;t\in\mathbb{R},\;x(t)\in \bfV,\\
    x(t)=x(t+2\pi),\; \dot{x}(t)=\dot{x}(t+2\pi),
\end{cases}
\end{equation}
where $\bfx_t:=\left(x(t),x(t-2\pi/m),\dots,x(t-(m-1)2\pi/m)\right)$. 

\begin{remark}\label{rem:same-assumptions}
Notice that $\ff:\bfV^m\to \bfV$  satisfies conditions (R), \ref{c1}--\ref{c5} as well. 
\end{remark}

  \subsection{A priori bounds}
  Consider the following modification of system \eqref{eq2}:
  \begin{equation} \label{eq3}
   \begin{cases}
    \ddot{x}(t) =\lambda(\ff(\bfx_t)-x(t))+x(t),\quad t\in\bbR,\;\; x(t)\in \bfV,\;\;\lambda\in [0,1],\\
    x(t) =x(t+2\pi),\dot{x}=\dot{x}(t+2\pi),
   \end{cases}
  \end{equation}
where $\ff: \bfV^m\rightarrow \bfV$ is a continuous map. 
One has the following
  
\begin{lemma}\label{lm31}
Assume that $\ff : \bfV^m\to \bfV$ satisfies  \ref{c3}. If  $x:\br\to \bfV$ is a $C^2$-differentiable $2\pi$-periodic function such that  
$\displaystyle \max_{t\in\bbR}  |x(t)|>R$, then $x(t)$ is not a solution to \eqref{eq3} for $\lambda\in [0,1]$.
\end{lemma}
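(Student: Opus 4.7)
The plan is to argue by contradiction via the standard maximum-principle trick adapted to second order periodic problems. Suppose $x(t)$ is a $C^2$, $2\pi$-periodic solution to \eqref{eq3} for some $\lambda \in [0,1]$ with $\max_t |x(t)| > R$. By continuity and periodicity, $|x(t)|^2$ attains its maximum at some $t_0 \in \mathbb R$, and $|x(t_0)| > R$.

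Next I would introduce $\varphi(t) := \tfrac{1}{2}|x(t)|^2$ and compute $\ddot{\varphi}(t) = |\dot{x}(t)|^2 + x(t)\bullet \ddot{x}(t)$. Since $t_0$ is an interior maximum of $\varphi$ (any point is interior by periodicity), one has $\ddot{\varphi}(t_0) \le 0$. On the other hand, substituting \eqref{eq3} yields
\begin{equation*}
\ddot{\varphi}(t_0) \;=\; |\dot{x}(t_0)|^2 \;+\; \lambda\, x(t_0)\bullet \ff(\bfx_{t_0}) \;+\; (1-\lambda)\,|x(t_0)|^2.
\end{equation*}
The first and third terms are non-negative. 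For the middle term, the key observation is that at the maximum one automatically has $|x(t_0 - j\,2\pi/m)| \le |x(t_0)|$ for every $j=1,\dots,m-1$, so the hypothesis \ref{c3} applies (since also $|x(t_0)|>R$) and gives $x(t_0)\bullet \ff(\bfx_{t_0}) > 0$.

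A short case analysis on the endpoints finishes the job. If $\lambda \in (0,1)$, all three terms on the right are $\ge 0$ and the middle one is strictly positive, so $\ddot{\varphi}(t_0) > 0$. If $\lambda = 0$, the third term equals $|x(t_0)|^2 > R^2 > 0$. If $\lambda = 1$, the middle term is strictly positive. In every case one obtains $\ddot{\varphi}(t_0) > 0$, contradicting $\ddot{\varphi}(t_0)\le 0$.

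I do not anticipate a genuine obstacle here: the argument is essentially linear once the right test function $\varphi = \tfrac{1}{2}|x|^2$ is chosen. The only place one must be a bit careful is in verifying the hypothesis of \ref{c3} at $t_0$, which requires noticing that the delayed arguments $x(t_0 - j\,2\pi/m)$ are all bounded in norm by $|x(t_0)|$ simply because $t_0$ maximizes $|x(\cdot)|$; this is what makes the commensurate-delay structure cooperate with the pointwise sign condition.
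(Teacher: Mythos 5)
Your proof is correct and takes essentially the same route as the paper: both argue by contradiction using $\varphi(t)=\tfrac12|x(t)|^2$ at its maximum point $t_0$, substitute equation \eqref{eq3} into $\ddot\varphi(t_0)=|\dot x(t_0)|^2+x(t_0)\bullet\ddot x(t_0)$, and invoke \ref{c3} (whose hypothesis $|x(t_0-j\tau)|\le |x(t_0)|$ you verify explicitly, while the paper leaves it implicit) to contradict $\ddot\varphi(t_0)\le 0$. The only cosmetic difference is that you handle $\lambda=0$ by the same computation, whereas the paper dismisses that case as obvious.
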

 
\begin{proof}
 Assume for the contradiction that $x(t)$ is a solution to \eqref{eq3} with $\displaystyle |x(t_o)|=\max_{t\in\bbR}|x(t)|>R$, and  consider the function $\phi(t):=\frac{1}{2}|x(t)|^2$.  Then, $\displaystyle \phi(t_0)=\max_{t \in\bbR}\phi(t)$, $\phi'(t_0)=x(t_0) \bullet \dot{x}(t_0)=0$ and $\phi''(t_0)=\dot{x}(t_0)\bullet\dot{x}(t_0)+\ddot{x}(t_0)\bullet x(t_0)\leq0$. However, by condition \ref{c3}, one has for $1\ge \lambda>0$:
   \begin{align*}
    \phi''(t_0)&=x(t_0)\bullet\ddot{x}(t_0)+\dot{x}(t_0)\bullet\dot{x}(t_0)\\
               &\geq \lambda x(t_0)\bullet \ff\Big(x(t_0),x(t_0-\tau),\dots,x(t_0-(m-1)\tau\Big)+(1-\lambda)x(t_0)\bullet x(t_0)\\
               &\geq \lambda x(t_0)\bullet\ff(x(t_0),\dots,x(t_0-(m-1)\tau))\\
               &> 0,
   \end{align*}
   which is the contradiction with the assumption that $\phi(t_0)$ is the maximum of $\phi(t)$, i.e. $\phi''(t_0)\leq 0$.
   In the case $\lambda=0$, the statement is obvious.
   \end{proof}
   
   \begin{lemma}\label{lm32}
Assume that $\ff : \bfV^m\to \bfV$ is continuous and satisfies  \ref{c3}. Then, there exists a constant $M>0$ such that for every solution  $x(t)$ to \eqref{eq3} (for some $\lambda \in [0,1]$),  one has: 
	\[
	\forall_{t\in \br}\;\;\; |x(t)|<M, \; \; |\dot x(t)|<M, \;\; |\ddot x(t)|<M.
	\]
	   \end{lemma}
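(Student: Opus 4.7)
The plan is to bootstrap the bound from Lemma \ref{lm31} into bounds on the derivatives by exploiting the equation \eqref{eq3} together with the $2\pi$-periodicity.

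\textbf{First step (bound on $x$).} Lemma \ref{lm31} already does the hard work: every $C^2$ periodic solution $x(t)$ of \eqref{eq3} with $\lambda \in [0,1]$ satisfies $|x(t)| \le R$ for all $t \in \bbR$. So set $M_0 := R$.

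\textbf{Second step (bound on $\ddot x$).} The continuous map $\ff$ restricted to the compact set $\overline{B_R(0)}^m \subset \bfV^m$ attains a maximum; call it $K := \max\{|\ff(\bfy)|\, :\, \bfy=(y^0,\dots,y^{m-1}),\ |y^j|\le R\}$. For any solution $x$ of \eqref{eq3}, the vector $\bfx_t=(x(t),x(t-\tau),\dots,x(t-(m-1)\tau))$ has each coordinate bounded by $R$ in norm, so $|\ff(\bfx_t)|\le K$. Rewriting \eqref{eq3} as $\ddot x(t)=\lambda\ff(\bfx_t)+(1-\lambda)x(t)$ and using the triangle inequality, I obtain $|\ddot x(t)|\le \lambda K+(1-\lambda)R\le K+R=:M_2$, uniformly in $\lambda\in[0,1]$ and in the solution.

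\textbf{Third step (bound on $\dot x$).} Here I use periodicity. Since $x$ is $C^2$ and $2\pi$-periodic, each coordinate function $x_i$ attains its maximum on $[0,2\pi]$, and at that point the coordinate $\dot x_i$ vanishes. (Equivalently, $\int_0^{2\pi}\dot x_i(t)\,dt=0$ forces $\dot x_i$ to have a zero.) So for every $i$ there exists $s_i\in[0,2\pi]$ with $\dot x_i(s_i)=0$, and integrating gives
\[
|\dot x_i(t)|=\Bigl|\int_{s_i}^t \ddot x_i(s)\,ds\Bigr|\le 2\pi\, M_2.
\]
Combining the coordinates, $|\dot x(t)|\le 2\pi\sqrt{n}\,M_2=:M_1$. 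Taking $M$ strictly larger than $\max\{M_0,M_1,M_2\}$ yields the strict inequalities in the statement.

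\textbf{Main obstacle / remarks.} There is nothing genuinely hard here; the argument is the standard a priori bound bootstrap. The only point requiring mild care is that the bound on $\ddot x$ is independent of $\lambda$, which is why the convex combination $\lambda\ff+(1-\lambda)x$ is convenient: both summands are controlled by quantities ($K$ and $R$) that come from the \emph{same} compact set where $\ff$ was evaluated. Of course the degenerate case $\lambda=0$ reduces \eqref{eq3} to $\ddot x=x$, whose only $2\pi$-periodic solution is $x\equiv 0$, so the bounds are trivial there.
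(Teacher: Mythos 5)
Your proof is correct and follows essentially the same route as the paper: bound $|x|$ by Lemma \ref{lm31}, bound $\ddot x$ by continuity of the right-hand side on a compact set (uniformly in $\lambda$), and bound $\dot x$ by using periodicity to find a zero of the derivative of a scalar projection and integrating $\ddot x$ over one period. The only cosmetic difference is that the paper projects onto arbitrary unit vectors $v$ (avoiding your $\sqrt{n}$ factor) instead of working coordinate-wise, which changes nothing essential.
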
 
    
   \begin{proof}
    By Lemma \ref{lm31}, there exists $R>0$ such that any $2\pi$-periodic solution $x(t)$ to \eqref{eq3} satisfies $|x(t)|\leq R$. 
    
    Put 
    \begin{align*}
     K_R:=\{(\lambda,\bfx)\in[0,1]\times \bfV^m :\;\bfx=(x,y^1,\dots,y^{m-1});\;|x|\leq R,\; |y^j|\leq R,j=1,\dots,m-1\}.
    \end{align*}
Clearly, the set $K_R$ is compact. Since the map $\wt{\ff}(\lambda,\bfx):=\lambda((\ff(\bfx)-x))+x$, $\bfx=(x,y^1,y^2,\dots,y^{m-1})$, $\lambda\in[0,1]$, is continuous,  it follows that  $\wt{\ff}(K_R)$ is bounded, i.e. there exists $M_1\geq 0$ s.t. $|\wt{\ff}(\lambda,\bfx)|\leq M_1$ for all $(\lambda,\bfx)\in K_R$. Therefore, every solution $x(t)$ to \eqref{eq3} satisfies $|\ddot{x}(t)|\leq M_1$.
    
Take  $v\in \bfV$ with $|v|\le 1$ and consider the scalar function $\psi(t):= x(t)\bullet v$.    Since $x(t)$ is  $2\pi$-periodic, there exists $t_0$ such that $\psi'(t_0)=0$ for some $t_0\in \br$, and for $t_o+2\pi\ge t\ge t_0$ one has:
    \begin{align*}
    | \dot x(t)\bullet v|&= |\psi'(t)|=\left|\psi'(t_o)+\int_{t_o}^t \psi''(s)ds\right|=\left|\int_{t_o}^t \psi''(s)ds\right|\\
     &=\left|\int^t_{t_0}\ddot{x}(s)\bullet v ds\right|\le \int^t_{t_0}|\ddot{x}(s)\bullet v | ds  \le \int^t_{t_0}|\ddot{x}(s)|\, | v | ds\\\
     &\le \int^t_{t_0}|\ddot{x}(s)|ds\leq\int^{2\pi}_0|\ddot{x}(s)|ds\leq2\pi M_1=:M_2.
    \end{align*}
    Therefore, 
    \[
    |\dot x(t)|=\sup_{|v|\le 1} |\dot x(t)\bullet v|\le M_2.
    \]
   Summing up,  $M := \max(R,M_1,M_2)+1$ is as required.
   \end{proof}  	
  
  \section{Operator Reformulation in Function Spaces}\label{sec:operator-reform}
   \subsection{Spaces}
  Consider the space $C_{2\pi}(\bbR;\bfV)$ of continuous $2\pi$-periodic functions equipped with the norm
  \begin{equation}
   \|x\|_{\infty}=\sup_{t\in\bbR}|x(t)|,\quad x\in C_{2\pi} (\bbR;\bfV).
  \end{equation}
  Let $\scrE :=C^2_{2\pi}(\bbR,\bfV)$ denote the  space of $C^2$-differentiable  $2\pi$-periodic functions from $\mathbb{R}$ to $\bfV$
  equipped with the norm
  \begin{align}
   \norm{x}_{\infty,2}&=\mbox{max}\{\|x\|_{\infty},\|\dot{x}\|_{\infty},\|\ddot{x}\|_{\infty}\}.
  \end{align}
 Let $O(2)$ denote the group of orthogonal $2\times 2$ matrices. Notice that $O(2)= SO(2)\cup SO(2) \kappa$, where $\kappa=\begin{bmatrix}1&0\\0&-1\end{bmatrix}$, and $SO(2)$ denote the group of rotations $\begin{bmatrix}\cos\tau&-\sin\tau\\\sin\tau&\cos\tau\end{bmatrix} $ which can be identified with $e^{i\tau}\in S^1\subset\mathbb{C}$. Notice that $\kappa e^{i\tau}=e^{-i\tau}\kappa$.\par	
  
  Put $G:=O(2)\times \Gamma\times\mathbb{Z}_2$ and define the $G$-action on $\scrE$ by
    \begin{align}
    (e^{i\theta}, \gamma,\pm 1)x(t)&:=\pm\gamma x(t+\theta),\label{action1}\\
    (e^{i\theta}\kappa, \gamma,\pm 1)x(t)&:=\pm\gamma x(- t+ \theta).\label{action2},
  \end{align}
 where $x\in\scrE,\; e^{i\theta},\kappa \in O(2),\; \gamma \in \Gamma$ and $ \pm 1 \in \mathbb Z_2$. Clearly, $\scrE$ is  an isometric  Banach 
 $G$-representation. 
 In a standard way, one can identify a $2\pi$-periodic function $x:\bbR\rightarrow \bf V$ with a function $\tilde{x}: S^1\rightarrow \bfV$, so one can write $C^2(S^1,\bfV)$ instead of $C^2_{2\pi}(\bbR,\bfV)$. Similar to \eqref{action1}-\eqref{action2} formulas define isometric $G$-representations on the spaces 
of periodic functions $C_{2\pi}(\bbR,\bfV)$ and $L^2_{2\pi}(\mathbb R;V)$ to which appropriate identifications are applied. 
One can easily describe the $G$-isotypic  decomposition of $\scrE $. Consider, first, $\scrE $ as an $O(2)$-representation
corresponding to its Fourier modes: 
\begin{align}\label{dcp}
\scrE=\overline{\bigoplus\limits_{k=0}^{\infty}\mathbb{V}_k},\quad\mathbb{V}_k:=\{\cos(kt)u+ \sin(kt) v:u,\, v\in \bfV\},
\end{align}
where each $\mathbb V_k$, for $k\in \bn$, is equivalent to the complexification $\bfV^c := \bfV \oplus i \bfV$ (as a {\it real} $O(2)$-representation)  of 
$\bf V$, where the rotations $e^{i\theta}\in SO(2)$ act on vectors $\bold z\in \bfV^c$ by $e^{i\theta}(\bold z) :=e^{-ik\theta}\cdot \bold z$ (here `$\cdot$'  stands for complex multiplication) and $\kappa \bold z:=\overline {\bold z}$. Indeed, the linear isomorphism $\vp_k : \bfV^c\to \mathbb V_k$ given by 
\begin{equation}\label{eq:complexification}
\vp_k(x+iy):= \cos(kt) u + \sin(kt) v, \quad u,\, v\in \bfV,
\end{equation}
is $O(2)$-equivariant. Clearly, $\mathbb V_0$ can be identified with $\bfV$ with the trivial $O(2)$-action, while $\mathbb V_k$, $k = 1,2,\ldots$, 
is modeled on the irreducible $O(2)$-representation $\cW_k\simeq\mathbb{R}^2$, where $SO(2)$ acts by $k$-folded rotations and $\kappa$ acts by complex conjugation. 
 
Next, each $\mathbb V_k$, $k = 0,1,2,\ldots$, is also $\Gamma \times \mathbb Z_2$-invariant. Let  $\cV_0^-, \cV_1^-,\cV_2^-,\dots, \cV_{\mathfrak r}^-$
be a complete list of all irreducible orthogonal $\Gamma \times \mathbb Z_2$-representations on which $\Gamma \times \mathbb Z_2$-isotypic  components of $\bfV \simeq \mathbb V_0$ are modeled (here ``$^-$" stands to indicate the antipodal $\mathbb Z_2$-action and $\cV_0^-$ corresponds to the trivial $\Gamma$-action). Since 
$\cV_{k,l}^- :=\cW_k\otimes \cV_l^-$ is an irreducible orthogonal $G$-representation, it follows that  $\mathbb V_0$ and   $\mathbb V_k$ (cf. \eqref{dcp}) admit the following $G$-isotypic  decompositions:
 \begin{equation}\label{iso-0}
     \mathbb V_0=V_{0}^-\oplus V^-_{1}\oplus\dots \oplus V^-_{\mathfrak r}
 \end{equation}
 (with the trivial $O(2)$-action) and 
    \begin{equation}\label{eq:iso-k}
        \mathbb V_k = V_{k,0}^-\oplus V_{k,1}^-\oplus\dots \oplus V_{k,\mathfrak r}^-,
        \end{equation}
where  $V_l^-$ (resp. $V_{k,l}^-$) is modeled on  $\cV_{0,l}^-$  (resp. $\cV_{k,l}^-$ with $k > 0$).    
\begin{remark}\label{rem:non-constant-solutions}        
Clearly, $x \in C^2(S^1;\bfV)$ is not a constant function if $G_x$ does {\it not} contain $O(2) \simeq O(2) \times \{1\} \times \{1\} < G$.
\end{remark}

  \subsection{Operators} 
   Define the following operators:
   \begin{alignat*}{3}
    L:&\scrE \rightarrow C(S^1,\bfV),\quad &Lx&:=\ddot{x}-x\\
    j : &\scrE\rightarrow C(S^1,\bfV^m),\quad & j(x)(t)&:=(x(t),x(t-\tau),\dots,x(t-(m-1)\tau))\\
   N : & \;C(S^1,\bfV^m) \rightarrow  C(S^1,\bfV),\quad 
   &N(x(t),y^1(t),\dots, y^{m-1}(t))&= \ff(x(t),y^1(t),\dots,y^{m-1}(t))-x(t)
   \end{alignat*}
  which can be illustrated on the diagram following below: 
   \begin{figure}[h!]\label{pict}
    \centering
  	\begin{tikzpicture}[>=angle 90]
  	
  	\matrix(a)[matrix of math nodes,
  	row sep=5.5em, column sep=2.5em,
  	text height=1.5ex, text depth=2ex]
  	{\scrE& & C(S^1,\bbV)\\
  		& C(S^1,\bbV^m) \\};
  	\path[->, ]  (a-1-1) edge node[above]{$L$}
  	                                   (a-1-3);
  	\path[->](a-1-1) edge node[below left]{$j$}
  	                      (a-2-2);
  	\path[<-](a-1-3) edge node[below right]{$N$} (a-2-2);
  	
  	\end{tikzpicture}
 
  	\caption{Operators involved}
   \end{figure}
  
  \newpage
  \noindent
  System \eqref{eq3} is equivalent to
   \begin{align}\label{eq4}
    Lx=\lambda \big(N(jx)),\quad  x\in\scrE, \lambda\in[0,1],
   \end{align}
  which is equivalent to \eqref{eq2}  for $\lambda = 1$.  Since $L$ is an isomorphism, equation \eqref{eq4} can be reformulated as follows:
   \begin{align}\label{eq51}
    \scrF_{\lambda}(x):=x-\lambda L^{-1}N(j(x))=0, \;\; x\in\scrE, \;\; \lambda \in [0,1]. 
   \end{align}
   
\begin{proposition}\label{prop}
Suppose that  $f$ satisfies conditions (R), \ref{c1}--\ref{c2} (cf. Remark \ref{rem:same-assumptions}) and  the nonlinear operator $\scrF_{\lambda}:\mathscr E\to \mathscr E$ is given by \eqref{eq51}. Then,  $\scrF_{\lambda}$ is a $G$-equivariant completely continuous field for every $\lambda\in [0,1]$.
   \end{proposition}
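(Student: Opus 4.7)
The plan is to verify two separate properties: the $G$-equivariance of $\scrF_\lambda$ and the complete continuity of the perturbation $T := L^{-1}\circ N\circ j$. Since $\scrF_\lambda = \id - \lambda T$ and $\id$ is trivially $G$-equivariant, these two properties together yield the statement.

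\emph{Equivariance.} I would verify equivariance of each factor of $T$ separately. The operator $L = d^2/dt^2 - \id$ is linear, commutes pointwise with $\Gamma$ and $\bz_2$, and commutes with the $O(2)$-action because $\frac{d^2}{dt^2}x(-t+\theta) = \ddot x(-t+\theta)$; on the Fourier mode $\mathbb V_k$ it acts as multiplication by $-(k^2+1)\ne 0$, hence $L$ is an isomorphism (open mapping theorem) and $L^{-1}$ inherits $G$-equivariance. For $N\circ j$, the $SO(2)$-, $\Gamma$- and $\bz_2$-actions are pointwise-compatible, the latter two being guaranteed by \ref{c1} and \ref{c2}. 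The subtle case is the reflection $\kappa$: a direct computation yields
\[
 j(\kappa x)(t)=\bigl(x(-t),\,x(-t+\tau),\dots,x(-t+(m-1)\tau)\bigr),
\]
whereas $\kappa\, j(x)(t) = j(x)(-t)$ has the delay arguments shifted in the opposite direction. Using $2\pi$-periodicity together with $\tau = 2\pi/m$ to rewrite $x(-t+k\tau) = x(-t - (m-k)\tau)$ reverses the order of the delay arguments; condition (R) (inherited by $\ff$ via Remark \ref{rem:same-assumptions}) then guarantees that $\ff$ is insensitive to this reversal, giving $\ff(j(\kappa x)(t)) = \ff(j(x)(-t))$. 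The linear term $-x(t)$ inside $N$ plainly commutes with $\kappa$, and $G$-equivariance of $T$ follows.

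\emph{Compactness.} Let $\{x_n\}\subset\scrE$ with $\|x_n\|_{\infty,2}\le M$. The bound on $\|\dot x_n\|_\infty$ yields uniform boundedness plus equicontinuity of $\{x_n\}$, so by Arzel\`a--Ascoli a subsequence $x_{n_k}\to x^*$ in $C(S^1,\bfV)$. Since $j$ is bounded linear from $C(S^1,\bfV)$ to $C(S^1,\bfV^m)$, $j(x_{n_k})\to j(x^*)$ uniformly; continuity of $\ff$ on the (uniformly bounded) compact subset of $\bfV^m$ containing $\{j(x_{n_k})(t)\}$ then gives $N(j(x_{n_k}))\to N(j(x^*))$ in $C(S^1,\bfV)$. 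Finally, $L^{-1}:C(S^1,\bfV)\to\scrE$ is bounded, so $T(x_{n_k})\to T(x^*)$ in $\scrE$. Hence $T$ is compact, and $\scrF_\lambda = \id - \lambda T$ is a completely continuous field for each $\lambda\in[0,1]$ (with continuous dependence on $\lambda$).

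The only non-routine step is the interaction of the reversing symmetry $\kappa$ with the delay structure of $j$: the map $j$ by itself is \emph{not} $\kappa$-equivariant, and one has to combine $2\pi$-periodicity (to recast forward shifts as backward shifts of complementary indices) with the reversibility hypothesis (R) so that the composition $N\circ j$ commutes with $\kappa$. Everything else reduces to the open mapping theorem for $L$ and Arzel\`a--Ascoli for compactness.
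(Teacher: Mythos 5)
Your proposal is correct and follows essentially the same route as the paper: $G$-equivariance is checked factorwise, with the only delicate point being the $\kappa$-equivariance of $N\circ j$, obtained exactly as in the paper by using $2\pi$-periodicity to convert the forward shifts $x(-t+k\tau)$ into backward shifts of complementary indices and then invoking (R), while complete continuity comes from the compactness of $j:\scrE\to C(S^1,\bfV^m)$ (which you justify, a bit more explicitly than the paper, via Arzel\`a--Ascoli) together with continuity of $N$ and boundedness of $L^{-1}$.
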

\begin{proof}
Combining \eqref{dcp} and  \eqref{eq:complexification} with the definition of $L$ yields:
\begin{equation}\label{eq:action-L}
L|_{\mathbb V_k} =-(k^2+1) \id:V^c\to V^c \quad \rm{and} \quad L|_{\mathbb V_0}=-\id \quad (k > 0). 
\end{equation}
In particular, $L$ (and, therefore, $L^{-1}$) is $G$-equivariant. Since $j$ is the embedding, it is $G$-equivariant as well. Since 
$L$ and $N$ are continuous and $j$ is a compact operator, it follows that  $\scrF_{\lambda}$ is a completely continuous field. 
Also, by assumption \ref{c1} (resp.  \ref{c2}), 
the operator $N$ is $\Gamma$-equivariant (resp. $\bz_2$-equivariant). 
Since system \eqref{weq} is autonomous, it follows that $N \circ j$ is $SO(2)$-equivariant. To complete the proof, one only needs to show that $N \circ j$ commutes with the $\kappa$-action. 
In fact,  combining   the definitions of the actions of $N$, $j$ and $\kappa$ with $2\pi$-periodicity of $x \in \scrE$ and condition $(R)$, one obtains for all
$t$:  
 \begin{align*}
N (j(\kappa x))(t) &=  N (j(x))(-t) =  N\Big(x(-t),x(-t-\tau),\dots,x(-t-(m-1)\tau)\Big) \\  
&= \ff\Big(x(-t),x(-t-\tau),\dots,x(-t-(m-1)\tau)\Big) - x(-t) \\
&\mathop{=}\limits^{(R)} \ff\Big(x(-t), x(-t-(m-1)\tau),  \dots, x(-t-\tau)\Big) - x(-t) \\
&=  \ff\Big(x(-t), x(-t +\tau -2\pi), \dots, x(-t + (m-1)\tau - 2\pi)\Big) -   x(-t) \\
&= \ff\Big(x(-t),x(-t + \tau),\dots,x(-t + (m-1)\tau)\Big) - x (-t)\\
&= \kappa \Big(f\Big(x(t), x(t -\tau), \dots, x(t - (m-1)\tau)\Big) - x (t) \Big)\\
&= \kappa (N (jx))(t)  
\end{align*}
 \end{proof}

 On the other hand,  $x\equiv 0$ is a solution to equation \eqref{eq51} for any $\lambda \in [0,1]$. Assuming that condition \ref{c5} is satisfied, put
 \begin{equation}\label{eq:linearization}
     \scrA:=D\scrF_1(0) : {\scrE} \longrightarrow {\scrE}.
 \end{equation}
 Then,
   \begin{equation}\label{eq:linearization-formula}
    \scrA=\id-L^{-1}(DN(0))\circ j:\scrE\longrightarrow\scrE.
   \end{equation}
 One can easily check that $\scrA$ is a Fredholm operator of index zero.  Therefore, $\scrA$ is an isomorphism if and only if  $0\not\in \sigma(\scrA)$. 
 Also, since $G$ does not move the origin, it follows that  $\scrA$ is $G$-equivariant.

 \begin{lemma}\label{lm33}
 Under the assumptions (R), \ref{c1}, \ref{c2} and \ref{c5}, suppose, in addition, that $0 \not\in \sigma(\scrA)$ (here $\sigma(\scrA)$ stands for the spectrum of $\scrA$).
 Then, for a sufficiently small $\epsilon>0$, 
the map $\scrF := \scrF_1$ (cf.  \eqref{eq51}) is $\Omega_{\varepsilon}$-admissibly  $G$-equivariantly homotopic to $\scrA$ given by 
\eqref{eq:linearization}-\eqref{eq:linearization-formula}
(here  $\Omega_{\varepsilon}:=\big\{x \in \scrE : \|x\|<\epsilon\big\}$).
\end{lemma}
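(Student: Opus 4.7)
The plan is to connect $\scrF$ to its linearization $\scrA$ by the straight-line homotopy
\begin{equation*}
h(t,x) := (1-t)\,\scrA(x) + t\,\scrF(x) = \scrA(x) + t\,R(x), \qquad (t,x)\in[0,1]\times\scrE,
\end{equation*}
where $R := \scrF - \scrA$ is the remainder of the Taylor expansion of $\scrF$ at the origin. By Proposition \ref{prop} and the fact that $G$ fixes $0$, both $\scrF$ and $\scrA = D\scrF(0)$ are $G$-equivariant, so each slice $h(t,\cdot)$ is $G$-equivariant. Moreover $h$ is a continuous homotopy of completely continuous fields on $\scrE$: $\scrF$ is one by Proposition \ref{prop}, while $\scrA = \id - L^{-1}\circ DN(0)\circ j$ has the same structure because $j:\scrE\to C(S^1;\bfV^m)$ is compact (Arzela--Ascoli applied to the compact embedding $C^2\hookrightarrow C$) and $L^{-1}\circ DN(0)$ is bounded.

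Admissibility of $h$ on $\Omega_{\varepsilon}$ for small $\varepsilon$ is the core of the argument. Since $\scrA$ is a $G$-equivariant Fredholm operator of index zero with $0\notin\sigma(\scrA)$, it is a linear isomorphism of $\scrE$, so there exists $c>0$ with $\|\scrA(x)\|_{\infty,2}\geq c\,\|x\|_{\infty,2}$ for every $x\in\scrE$. Assumption \ref{c5} gives pointwise differentiability of $f$ at $0\in\bfV^m$ with derivative $A=[A_0,\dots,A_{m-1}]$; an $\omega$-modulus argument then upgrades this to Fr\'echet differentiability of the Nemytskii operator $N$ at $0$ in the supremum norm, after which boundedness of $L^{-1}$ and continuity of $j$ yield
\begin{equation*}
\|R(x)\|_{\infty,2} \;=\; \bigl\|L^{-1}\bigl(N(jx) - DN(0)(jx)\bigr)\bigr\|_{\infty,2} \;=\; o\bigl(\|x\|_{\infty,2}\bigr) \quad \text{as } x\to 0.
\end{equation*}

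I would then choose $\varepsilon>0$ so small that $\|R(x)\|_{\infty,2}\leq \tfrac{c}{2}\|x\|_{\infty,2}$ whenever $\|x\|_{\infty,2}\leq\varepsilon$. For $x\in\partial\Omega_{\varepsilon}$ and $t\in[0,1]$ this gives
\begin{equation*}
\|h(t,x)\|_{\infty,2} \;\geq\; \|\scrA(x)\|_{\infty,2} - t\,\|R(x)\|_{\infty,2} \;\geq\; c\varepsilon - \tfrac{c}{2}\varepsilon \;=\; \tfrac{c}{2}\varepsilon \;>\;0,
\end{equation*}
so $h$ is $\Omega_{\varepsilon}$-admissible and realizes the required $G$-equivariant homotopy between $\scrF$ (at $t=1$) and $\scrA$ (at $t=0$).

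The step I expect to be the main obstacle is the $o$-estimate $\|R(x)\|_{\infty,2}=o(\|x\|_{\infty,2})$. Condition \ref{c5} only asserts pointwise differentiability of $f$ at $0$, so one must carefully extract from the definition of the derivative a uniform modulus $\omega(r)\to 0^+$ as $r\to 0^+$ satisfying $|f(\mathbf{v})-A\mathbf{v}|\leq \omega(|\mathbf{v}|)|\mathbf{v}|$, and then push it through the supremum norm to conclude that $N$ on $C(S^1;\bfV^m)$ is Fr\'echet differentiable at $0$ (and not merely Gateaux). Once this is established, the remaining parts reduce to the invertibility of $\scrA$ and the triangle inequality above, and the $G$-equivariance of the homotopy comes for free from Proposition \ref{prop}.
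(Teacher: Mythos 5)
Your proof is correct, and it uses the same straight-line homotopy $h(t,x)=(1-t)\scrA(x)+t\scrF(x)$ as the paper; the difference lies entirely in how $\Omega_\varepsilon$-admissibility is verified. The paper argues by contradiction: assuming zeros $(t_n,x_n)$ with $x_n\to 0$, it normalizes $v_n=x_n/\|x_n\|$, uses the differentiability of $\scrF$ at $0$ to get $\scrA(v_n)\to 0$, and then uses compactness of $j$ to extract a convergent subsequence $v_{n_k}\to y_0$ with $\|y_0\|=1$ and $\scrA(y_0)=0$, contradicting the fact that $\scrA$ is an isomorphism. You instead argue directly and quantitatively: since $0\not\in\sigma(\scrA)$, the operator $\scrA$ has a bounded inverse, so $\|\scrA x\|_{\infty,2}\ge c\|x\|_{\infty,2}$ with $c=\|\scrA^{-1}\|^{-1}$, and combining this with the remainder estimate $\|R(x)\|_{\infty,2}=o(\|x\|_{\infty,2})$ gives a uniform lower bound for $\|h(t,x)\|$ on $\partial\Omega_\varepsilon$. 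Both routes rest on the same two ingredients (invertibility of $\scrA$ and differentiability of $\scrF$ at the origin); your version is the more elementary Leray--Schauder-type estimate and avoids the compactness extraction, at the cost of explicitly invoking the bounded inverse. A genuine merit of your write-up is that you isolate and justify the step the paper compresses into the phrase ``by linearity and differentiability'': condition \ref{c5} only gives pointwise differentiability of $f$ at $0\in\bfV^m$, and your $\omega$-modulus argument (with $\omega$ taken nondecreasing, so that $\|N(jx)-DN(0)(jx)\|_\infty\le\omega(\|x\|_{\infty,2})\,\|x\|_{\infty,2}$, then composed with the bounded operator $L^{-1}$) is exactly what upgrades this to Fr\'echet differentiability of the Nemytskii operator at the zero function in the sup norm, which both proofs need.
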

\begin{proof}
Put $\scrH_t(x):=(1-t)\scrA(x)+t\scrF(x), x\in\scrE, t\in[0,1]$, and show that there exists a sufficiently small $\epsilon>0$ such that $\scrH_t(\cdot)$ is 
an $\Omega_{\epsilon}$-admissible homotopy. Indeed, assume for contradiction, that there exist sequences  $\{x_n\} \subset\scrE$ and $\{t_n\}\subset[0,1]$ such that $x_n\rightarrow 0, t_n\rightarrow t_0$ and
\begin{equation*}
\scrH_{t_n}(x_n)=\scrA(x_n)-t_n(\scrA(x_n)-\scrF(x_n)) = 0 \quad\text{for all $n \in \mathbb N$.}
\end{equation*}
Then, by linearity and differentiability, one has:
\begin{equation}\label{eq:A-to-0}
\frac{\scrA(x_n)}{\|x_n\|} = {\scrA\Big(\frac{x_n}{\|x_n\|}\Big) = \frac{t_n(\scrA(x_n)-\scrF(x_n))} 
{\|x_n\|}\rightarrow 0  \quad \text{as}\;\; n\rightarrow \infty}.
\end{equation}
Put $v_n:=\frac{x_n}{\|x\|}$. Combining \eqref{eq:A-to-0} with \eqref{eq:linearization-formula} yields: 
\begin{equation}\label{eq:comp-converge}
\scrA(\itv_n)=\itv_n-L^{-1}(DN(0)(j(\itv_n)))
\rightarrow 0 \;\; \mbox{as}\; n \rightarrow\infty .
\end{equation}
 \noindent 
 Since $j$ is a compact operator, 
 there exist $y_0$ and a subsequence $\{v_{n_k} \}$ such that 
 $L^{-1}(DN(0)(j(\itv_{n_k})))\rightarrow y_0 $. Hence, by continuity of $\scrA$ combined with \eqref{eq:comp-converge}, one has 
 $v_{n_k}\rightarrow y_0$ and $\| y_0 \|=1$.
 Therefore, $\scrA(y_0)=0$ which is impossible since $\scrA$ is an isomorphism.
   	 \end{proof}

\subsection{Abstract equivariant degree based result} 
	
To formulate an equivariant degree based result related to problem \eqref{eq2}, we need additional concepts.
\begin{definition}\label{def:extended-maximal}
  	
	(a) An orbit type $(H)$ in the space $\scrE$ is said to be of \textit{maximal kind} if there exists $k\ge 0$ and $u\neq 0, u\in \bbV_k$, such that $H=G_u$ and $(H)$ is a {\it maximal} orbit type in $\Phi (G,\bbV_k\setminus\{0\})$.
	
	\medskip
	
	(b) Take $x \in \scrE$ and assume that there exists $p \in \mathbb N$ such that $(\phi_p({G}_x)) = (H)$, where $(H)$ is of maximal kind and 
the homomorphism 	$\phi_p : O(2)\times \Gamma\times\bbZ_2\rightarrow O(2)\times \Gamma\times\bbZ_2$ is given by   
\begin{equation*}
\phi_p(g, h,\pm1)=(\mu_p(g), h,\pm1), \quad g\in O(2), \;\; h\in \Gamma
\end{equation*}
(here $\mu_p : O(2)\rightarrow O(2)/ \bbZ_p \simeq O(2)$ is the natural $p$-folding homomorphism of $O(2)$ into itself). Then, $x$ is said to have  an \textit{extended orbit type} $ (H)$.
 \end{definition}
 
 \begin{remark}\label{rem:extended-type}
 The above concepts have a very transparent meaning. Without extra assumptions, typical equivariant degree based results provide 
 {\it minimal spacious symmetries} of the corresponding periodic solutions (cf. Definition \ref{def:extended-maximal}(a)) and do {\it not} provide an information on its {\it minimal  
 period} (Definition \ref{def:extended-maximal}(b)).
 \end{remark} 

\medskip

Under the assumptions (R), \ref{c1}, \ref{c2} and \ref{c5}, the $G$-equivariant degree $\gdeg(\scrA,B(\scrE)) \in A(G)$ is correctly defined
provided that $0 \not\in \sigma(\scrA)$ (here $B(\scrE)$ denotes the unit ball in $\scrE$). Put 
\begin{equation}\label{eq5}
\omega := (G) - \gdeg(\scrA,B(\scrE)).
\end{equation}
We are now in a position to formulate the abstract result.
 \begin{proposition}\label{th:abstract}
 Assume that $f : \bfV^m \to \bfV$ satisfies conditions (R), \ref{c1}\---\ref{c5}. Assume, in addition, that  $0 \not\in \sigma(\scrA)$ (cf.
\eqref{eq51}, \eqref{eq:linearization}, \eqref{eq:linearization-formula}). Assume, finally,
\begin{equation}\label{eq:degree-actual}
       \omega=n_1(H_1)+n_2(H_2)+\dots +n_m(H_m),\quad n_j\neq 0,(H_j)\in\Phi_0(G)
\end{equation}
(cf. \eqref{eq5}). Then:
\begin{itemize}
\item[(a)] for every $j=1,2,\dots,m$, there exists a ${G}$-orbit of $2\pi$-periodic solutions $x\in \scrE\setminus\{0\}$ to  \eqref{eq2} such that 
  $(G_x) \geq (H_j)$; 
\item[(b)] if $H_j$ is finite, then the solution $x$ is non-constant; 
\item[(c)] if $(H_j)$ is of maximal kind, then the solution $x$ has the extended orbit type $(H_j)$.
\end{itemize}
\end{proposition}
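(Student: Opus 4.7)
The plan is a standard two-level homotopy argument in the equivariant degree theory, combined with the $G$-isotypic information set up earlier. First, I would invoke Lemma~\ref{lm32} to choose $M>0$ so large that every $2\pi$-periodic solution of the homotopy family \eqref{eq3} lies in
\[
\Omega_M := \cenc{x \in \scrE : \|x\|_{\infty,2} < M}.
\]
By Proposition~\ref{prop} each $\scrF_\lambda$ is a $G$-equivariant completely continuous field, and by the choice of $M$ the family $\{\scrF_\lambda\}_{\lambda \in [0,1]}$ has no zeros on $\partial \Omega_M$; therefore this is an $\Omega_M$-admissible $G$-equivariant homotopy. By homotopy invariance and normalization,
\[
\gdeg(\scrF_1, \Omega_M) \;=\; \gdeg(\scrF_0, \Omega_M) \;=\; \gdeg(\id, \Omega_M) \;=\; (G),
\]
since $0 \in \Omega_M$ and $\scrF_0 = \id$.

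\textbf{Local computation at the origin and additivity.} Second, I would zoom in on the trivial solution. Because $0 \notin \sigma(\scrA)$, Lemma~\ref{lm33} produces a radius $\epsilon \in (0,M)$ such that $\scrF_1$ and $\scrA$ are $G$-equivariantly homotopic on $\Omega_\epsilon$; since $\scrA$ is a $G$-equivariant linear isomorphism, $\gdeg(\scrF_1, \Omega_\epsilon) = \gdeg(\scrA, B(\scrE))$. Additivity of the equivariant degree, applied to the disjoint open pieces $\Omega_\epsilon$ and $\Omega_M \setminus \overline{\Omega_\epsilon}$ (both $G$-invariant, with $\scrF_1$ admissible on each), then yields
\[
\gdeg(\scrF_1, \Omega_M \setminus \overline{\Omega_\epsilon}) \;=\; (G) - \gdeg(\scrA, B(\scrE)) \;=\; \omega.
\]

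\textbf{Extracting solutions.} For part (a), I would apply the existence property of the equivariant degree: if $\omega = \sum_{j=1}^{m} n_j (H_j)$ with $n_j \neq 0$, then for every $j$ there is a point $x_j \in \Omega_M \setminus \overline{\Omega_\epsilon}$ with $\scrF_1(x_j) = 0$ (hence a $2\pi$-periodic solution of \eqref{eq2} by the operator reformulation) and $(G_{x_j}) \ge (H_j)$; moreover $x_j \neq 0$ since $x_j \notin \Omega_\epsilon$. For part (b), if $H_j$ is finite but the solution $x_j$ were constant, Remark~\ref{rem:non-constant-solutions} would force $G_{x_j} \supseteq O(2) \times \{1\} \times \{1\}$; combining this with the $G$-isotypic decomposition \eqref{iso-0}, which controls orbit types on $\mathbb V_0$, one excludes such a solution by checking that the contribution from constants to $\omega$ lives in orbit types containing $O(2) \times \{1\} \times \{1\}$ and therefore cannot account for coefficients at finite $(H_j)$. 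For part (c), assuming $(H_j)$ is of maximal kind, I would apply the standard ``exact orbit type'' refinement: maximality in $\Phi(G,\mathbb V_k \setminus \{0\})$ upgrades $(G_{x_j}) \ge (H_j)$ to $(G_{x_j}) = (H_j)$ on the relevant Fourier mode, and the $p$-folding homomorphism $\phi_p$ of Definition~\ref{def:extended-maximal}(b) then identifies the minimal period and produces the extended orbit type.

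\textbf{Main obstacle.} The conceptual core is clean, but the delicate point is the orbit-type bookkeeping in part (c): one must confirm that the solution produced by the existence property realizes the maximal isotropy \emph{on a specific Fourier mode} $\mathbb V_k$ and that the $p$-folding gives the correct minimal period rather than merely a divisor of $2\pi$. This rests on the fact that non-trivial contributions to the coefficient at a maximal $(H_j)$ can come only from one $k$, which must be checked from the equivariant spectral decomposition of $\scrA$ in the next section; indeed, the degree computation $\gdeg(\scrA, B(\scrE))$ via multiplicativity and basic degrees $\deg_{\cV_{k,l}^-}$ is where all the real labor, and the translation from algebra back to dynamics, will reside.
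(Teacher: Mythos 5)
Your proposal follows essentially the same route as the paper's proof: the admissible homotopy $\{\scrF_\lambda\}$ on the large ball (via the a priori bounds) giving $\gdeg(\scrF_1,\Omega_M)=(G)$, the local linearization homotopy of Lemma~\ref{lm33} giving $\gdeg(\scrF_1,\Omega_\epsilon)=\gdeg(\scrA,B(\scrE))$, additivity to localize $\omega$ on $\Omega_M\setminus\overline{\Omega_\epsilon}$, the existence property for part (a), and parts (b) and (c) read off from Remark~\ref{rem:non-constant-solutions} and Definition~\ref{def:extended-maximal}, exactly as in the paper. Your explicit invocation of additivity (left implicit in the paper) and your citation of Lemma~\ref{lm32} for the $C^2$-bound defining $\Omega_M$ are, if anything, slightly more careful than the original.
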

\begin{proof}
(a) Take $\Omega_{\epsilon}$ provided by Lemma \ref{lm33}. Then, $\scrF$ is $\Omega_{\epsilon}$-admissible and, by 
equivariant homotopy invariance of the equivariant degree,      
\begin{equation}\label{eq:scrF-degree-zero}
\gdeg(\scrF,\Omega_{\epsilon})=\gdeg(\scrA,B(\scrE)).
 \end{equation}
Similarly, take $M$ provided by Lemma \ref{lm31} and put  $\Omega_M := \big\{x \in \scrE : \|x\| < M \big\}$. Then,
$\scrF = \scrF_1$ is $\Omega_{M}$-admissible and equivariantly homotopic to $\scrF_0 = \id$. Hence,
\begin{equation}\label{eq:scrF-degree-infty}
\gdeg(\scrF,\Omega_{M})=(G).
\end{equation}
Combining \eqref{eq5}, \eqref{eq:degree-actual},  \eqref{eq:scrF-degree-zero}, \eqref{eq:scrF-degree-infty} with the existence property of the equivariant degree yields part (a).

\medskip

(b) Follows from Remark \ref{rem:non-constant-solutions}.

\medskip

(c) Follows from Definition \ref{def:extended-maximal}.   
\end{proof}

\section{Computation of $\gdeg(\scrA,B(\scrE))$}\label{sec:degree-computation}

Proposition \ref{th:abstract} reduces the study of problem \eqref{eq2} to computing $\gdeg(\scrA,B(\scrE))$. In this section,
we will develop a ``workable" formula for $\gdeg(\scrA,B(\scrE))$ and analyze the non-triviality of some of  its coefficients.

\subsection{Spectrum of $\scrA$}
To begin with, we collect the equivariant spectral data related to $\scrA$. Since $\scrA$ is $G$-equivariant, it respects isotypic  decomposition  
\eqref{dcp}. Put $\gamma:=e^{\frac{i2\pi}{m}}$ and $\scrA_k:=\scrA|_{\mathbb V_k}$. Keeping in mind the commensurateness of delays in problem 
\eqref{eq2} and taking into account \eqref{eq:action-L}, one easily obtains:
\begin{equation}\label{eq:Ak}
\scrA_k =\id+\frac{1}{k^2+1}\left(\sum _{j=0}^{m-1}\gamma^{jk}A_j-\id\right), \quad k=0,1,2\dots,
\end{equation}
where $A_j$ stands for the derivative of $\ff$ with respect to $j$-th variable 
(to simplify notations, we keep for derivatives of $\ff$ the same symbols as for the ones of $f$; cf. assumption \ref{c5}). By assumption (R),  
$A_j = A_{m - j}$ for $j = 1,...,m-1$, hence \eqref{eq:Ak} can be simplified as follows: 
\begin{equation}\label{eq:Ak1}
\scrA_k =\id+\frac{1}{k^2+1}\left(A_0+\sum _{j=1}^{r} 2\cos\frac {2\pi jk}{m}A_j-\ve_m A_r-\id\right), \quad k=0,1,2\dots,\; r=\left\lfloor\frac {m-1}{2} \right\rfloor ,
\end{equation}
where 
\begin{equation}\label{eq:epsilon-m}
\ve_m = \begin{cases} 
1\quad  \text{if $m$ is even};\\
 0 \quad  \text{otherwise}.
\end{cases}
\end{equation}
   
 Since   the matrices $A_j$ are $\Gamma$-equivariant, one has $\scrA_k(V_{k,l}^-)\subset V_{k,l}^-$ $(k = 0,1,2,\ldots$ and   $l=0,1,2,\dots, \mathfrak r)$.
  In particular, 
   $A_j(V_l^-)\subset V_l^-$, so put
   \[
   A_{j,l}:=A_j|_{V_l^-}, \quad l=0,1,2,\dots, \mathfrak r.
   \] 
To simplify the computations, we will assume 
that instead of \ref{c5} the following condition is satisfied:
 \begin{itemize}
\item [($A_4'$)] $A_{j,l}=\mu_j^l \id$ for $l=0,1,2,\dots, \mathfrak r$ and $j=0,1,\dots,m-1$.
   \end{itemize}
Clearly, under condition ($A_4'$),  the matrices $A_j$  commute with each other, therefore, condition \ref{c5} follows. 
In particular, their corresponding  eigenspaces coincide: $E(\mu_j^l)=E(\mu_{j'}^l)$. This way, one 
 obtains the following description of the spectrum of $\scrA$:
 \begin{equation}\label{eq:spectru} 
 \sigma(\mathscr A)=\bigcup_{k=0}^\infty \sigma(\mathscr A_k),
 \end{equation}
 where 
   \begin{equation}\label{spcta}
    \sigma (\scrA_k)=\left\{1+\frac{1}{1+k^2}\left(\mu^l_0+\sum _{j=1}^{r}2\cos\frac {2\pi jk}m\mu^l_j-\ve_m\mu_r^l-1\right): l=0,1,\dots,\mathfrak r, \; 
     r=\left\lfloor\frac {m-1}{2} \right\rfloor\right\}.
   \end{equation}
   
\bigskip

\subsection{Computation of $\gdeg(\scrA,B(\scrE))$: reduction to basic $G$-degrees}
Observe that $\xi_{k,l} \in \sigma (\scrA_k)$ contributes $\gdeg(\scrA,B(\scrE))$ only if $\xi_{k,l} < 0$. Clearly (cf. \eqref{spcta}),
\begin{equation}\label{eq:eigen-kl}
\xi_{k,l}:=1+\frac{1}{1+k^2}\left(\mu_0^l +\sum _{j=1}^{r} 2\cos \frac{2\pi jk}{m}\mu^l_j-\ve_m \mu_r^l-1 \right), \;  l=0,1,\dots,\mathfrak r, \; r=\left\lfloor\frac {m-1}{2} \right\rfloor , \; k = 0,1,...
\end{equation}
is negative (i.e. $\xi_{k,l}\in \sigma_-(\scrA)$) if and only if
\begin{align}\label{ieqk}
k^2 <-\mu_0^l-\sum _{j=1}^r  2\cos \frac{2\pi jk}{m}\mu_j^l+\ve_m\mu_r^l, \quad  l=0,1,\dots,\mathfrak r, \; r=\left\lfloor\frac {m-1}{2} \right\rfloor , \; k = 0,1,...
\end{align}
By condition ($A_4'$), the $\cV_l^-$-isotypic  multiplicity of $\mu_j^l$ is independent of $j$ and is equal to 
\begin{equation}\label{eq:m-l}
m^l := \text{dim\,} E(\mu_j^l)/\text{dim\,} \cV_l^-=\text{dim\,}V_{l}^- /\text{dim\,} \cV_l^-.
\end{equation}
Put (cf. \eqref{ieqk}-\eqref{eq:m-l})
\begin{align}\label{mtpl}
m_{k,l}:=\begin{cases} 
m^l &\; \text {if } \;   k^2 <-\mu_0^l-\sum _{j=1}^{r}  2\cos \frac{2\pi jk}{m}\mu_j^l+\ve_m\mu_r^l  \\
0 & \text{ otherwise}.
\end{cases}
\end{align}
Then,   
\begin{align}\label{eq6}
\gdeg(\scrA,B({\scrE}))&=\prod_k 
\prod_{l=0}^{\mathfrak r}\big(\mbox{deg}_{\cV_{k,l}^-}\big)^{m_{k,l}}
\end{align}
Notice that in the product \eqref{eq6},  one has $m_{k,l}\not=0$ for finitely many values of $k$ and $l$ (cf. \eqref{mtpl}). Hence,  
for almost all the factors in \eqref{eq6}, one has $(\mbox{deg}_{\cV_{k,l}})^0=({G})$, which is the unit element in $A({G})$. Thus,  formula   \eqref{eq6}  is well-defined. 
 
\begin{remark}\label{rem:odd-powers-only}   
Using the relation $(\mbox{deg}_{\cV_{k,l}^-})^2=(G)$, one can further simplify formula \eqref{eq6}.  Clearly, only the exponents $m_{k,l}\not=0$ which are odd will contribute to the value of \eqref{eq6}. 
\end{remark}

\subsection{Maximal orbit types in  products of basic $G$-degrees}
In order to effectively apply Proposition \ref{th:abstract}(c), one should answer the following question: which orbit types of maximal kind (see Definition \ref{def:extended-maximal}) appearing
in the right-hand side of formula \eqref{eq6} will ``survive" in the resulting product? 

To begin with, take $\deg_{\cV_{k,l}^-}$ appearing in \eqref{eq6} and let $(H_o)$ be a maximal  orbit type in $\cV_{k,l}^-\setminus \{0\}$. Then,  
\begin{equation}\label{eq:basic}
\deg_{\cV_{k,l}^-}=(G)- x_o(H_o)+a,\quad     -x_o:=\frac{(-1)^{\text{dim}\cV_{k,l}^{-H_o}}-1}{|W(H_o)|},
\end{equation}
where $a \in A(G)$ has  a zero coefficient corresponding to $(H_o)$.  
Then, by  \eqref{eq:basic}, one has  
\begin{equation}\label{eq:coef-x-o-ireduc}
x_o=\begin{cases}
0 & \; \text{if $\text{dim}\cV_{k,l}^{-H_o}$ is even}\\ 
1 & \text{ if $\text{dim}\cV_{k,l}^{-H_o}$ is odd  and $|W(H_o)|=2$}\\
2 &    \text{ if $\text{dim}\cV_{k,l}^{-H_o}$ is odd  and $|W(H_o)|=1$}.
\end{cases}
\end{equation}
  
Notice that for two different irreducible $G$-representations $\cV_{k,l}^-$ and $\cV_{k',l'}^-$, it is possible that $\deg{\cV_{k,l}^-}=\deg{\cV_{k',l'}^-}$ 
(see, for example, \cite{AED}, p. 183, where irreducible $\Gamma = A_5$-representations are discussed). In this case, one has 
$\deg{\cV_{k,l}^-} \cdot \deg{\cV_{k',l'}^-} = (G)$ (cf. Remark \ref{rem:odd-powers-only}). 

\begin{lemma}\label{lem:same-maximal}
Suppose that $(H_o)$ is a maximal orbit type in $\cV_{k,l}^-\setminus\{0\}$ and $\cV_{k',l'}^-\setminus \{0\}$ and both $\cV_{k,l}^{-H_o}$ and 
$\cV_{k',l'}^{-H_o}$ are of odd dimension. Then:
\begin{itemize}
\item[(i)] $\deg{\cV_{k,l}^-}$ and $\deg{\cV_{k',l'}^-}$ share the same coefficient  $x_o$ standing 
by $(H_o)$;
\item[(ii)] the coefficient standing by $(H_o)$ in  $  \deg{\cV_{k,l}^-}\cdot \deg{\cV_{k',l'}^-}$ is zero. 
\end{itemize}
\end{lemma}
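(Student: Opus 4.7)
The first claim follows immediately from formula \eqref{eq:coef-x-o-ireduc}: the value $x_o$ depends only on $|W(H_o)|$ and the parity of $\dim\cV^{H_o}$. Since both $\dim\cV_{k,l}^{-H_o}$ and $\dim\cV_{k',l'}^{-H_o}$ are odd by hypothesis while $W(H_o)$ depends only on $H_o$, the formula produces the same value $x_o$ in both cases; moreover, it yields the identity $x_o\cdot|W(H_o)|=2$, which is the key arithmetic fact driving (ii).

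For part (ii), my plan is to expand the product in $A(G)$ and argue that all but two summands contribute nothing to the $(H_o)$-coefficient. Writing
\[
\deg_{\cV_{k,l}^-} = (G) - x_o(H_o) + a, \qquad \deg_{\cV_{k',l'}^-} = (G) - x_o(H_o) + b
\]
with the $(H_o)$-coefficients of $a$ and $b$ equal to zero, one has
\[
\deg_{\cV_{k,l}^-}\cdot\deg_{\cV_{k',l'}^-} = (G) - 2x_o(H_o) + a + b + x_o^2\,(H_o)\!\cdot\!(H_o) - x_o\bigl[(H_o)\!\cdot\!b + a\!\cdot\!(H_o)\bigr] + a\!\cdot\!b.
\]
The support rule implicit in \eqref{eq:mult} forces an orbit type $(L)$ to appear in $(K_1)\!\cdot\!(K_2)$ only if $(L)\le(K_1)$ and $(L)\le(K_2)$, because the isotropy of any $(gK_1,g'K_2)\in G/K_1\times G/K_2$ is contained in a conjugate of each factor. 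Every orbit type $(K)\ne(H_o)$ appearing in $a$ (resp.\ $b$) is an isotropy type in $\cV_{k,l}^-\setminus\{0\}$ (resp.\ $\cV_{k',l'}^-\setminus\{0\}$), and by maximality of $(H_o)$ such a $(K)$ must satisfy $(H_o)\not\le(K)$; hence $a$, $b$, $(H_o)\!\cdot\!b$, $a\!\cdot\!(H_o)$, and $a\!\cdot\!b$ all contribute zero to the coefficient of $(H_o)$.

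It remains to compute the coefficient $c_o$ of $(H_o)$ in $(H_o)\!\cdot\!(H_o)$. By \eqref{eq:mult}, this is the number of $(H_o)$-orbits in $G/H_o\times G/H_o$: a point $(gH_o,g'H_o)$ has isotropy $gH_og^{-1}\cap g'H_og'^{-1}$, which lies in the conjugacy class $(H_o)$ iff $gH_og^{-1}=g'H_og'^{-1}$, i.e.\ $g^{-1}g'\in N(H_o)$. Using the diagonal $G$-action to normalize $g=e$, the $(H_o)$-orbits correspond bijectively to $W(H_o)=N(H_o)/H_o$, giving $c_o=|W(H_o)|$. Consequently the coefficient of $(H_o)$ in the product is
\[
-2x_o + x_o^2\,|W(H_o)| \;=\; x_o\bigl(x_o|W(H_o)|-2\bigr) \;=\; 0
\]
by part (i). The main obstacle is the maximality/support step: one must ensure that neither $a$, $b$ nor any of the cross-products $(H_o)\!\cdot\!b$, $a\!\cdot\!(H_o)$, $a\!\cdot\!b$ can sneak a $(H_o)$-contribution back in through some indirect route. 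This hinges on combining the support rule for Burnside ring multiplication with the strict maximality of $(H_o)$, and on the subtle but standard fact that a subgroup of a conjugate of $H_o$ which is itself conjugate to $H_o$ must equal that conjugate—a consequence of the finiteness of $W(H_o)$.
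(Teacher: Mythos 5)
Your proof is correct and takes essentially the same route as the paper: expand the product in $A(G)$, identify the coefficient of $(H_o)$ in $(H_o)\cdot(H_o)$ as $|W(H_o)|$, and cancel using $x_o\,|W(H_o)|=2$ (the paper does this by the two-case check $x_o=1,\ |W(H_o)|=2$ and $x_o=2,\ |W(H_o)|=1$). The only differences are minor: you compute the $(H_o)$-coefficient of $(H_o)\cdot(H_o)$ by directly counting orbits in $G/H_o\times G/H_o$ instead of invoking the Recurrence Formula, and you make explicit the support-plus-maximality argument that the paper leaves implicit when it asserts that the remaining terms $a$, $a'$, $b$ have zero coefficient standing by $(H_o)$.
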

\begin{proof}
\

(i) Follows immediately from \eqref{eq:coef-x-o-ireduc}.
\smallskip

(ii) Consider the following product
\begin{align*}
\deg{\cV_{k,l}^-}\cdot \deg{\cV_{k',l'}^-}&=  \big( (G)-x_o(H_o)+ a   \big)\cdot \big( (G)-x_o(H_o)+ a'    \big)\\
&= (G)-2x_o(H_o)+x_o^2(H_o)\cdot (H_o)+b,
\end{align*}
where   $a$, $a'$, $b\in A(G)$ are elements with zero coefficient standing by $(H_o)$. 
Then, by using the Recurrence Formula for multiplication, one obtains:
  \[
  (H_o)\cdot(H_o)=y_o(H_o)+ d, \quad y_o:=\frac{n(H_o,H_o)^2|W(H_o)|^2}{|W(H_o)|}= |(W(H_o)|.
  \]
 Hence,
  \[
  -2x_o(H_o)+x_o^2(H_o)\cdot (H_o)=\begin{cases}
 -2+2 & \text{if } x_o=1, \;\; |W(H_o)|=2\\
  -4+4  & \text{if } x_o=2, \;\; |W(H_o)|=1\\
  \end{cases}\cdot (H_o)
  =0,
  \]
and the statement follows.
\end{proof}

\medskip

Observe that the set $\displaystyle \bigcup_{k>0} \Phi_0(G,\mathbb V_k)\setminus\{(G)\}$ has no maximal elements. Indeed,  for an orbit type $(H)$ in $\Phi_0(G,\mathbb V_k)\setminus \{(G)\}$, any orbit type $(H')\in \Phi_0(G,\mathbb V_{pk})\setminus \{(G)\}$ such that $\phi_p(H')=(H)$, satisfies  $(H')>(H)$ (see Definition \ref{def:extended-maximal}). Notice that  if $(H_o)$ is maximal in 
$\Phi_0(G,\mathbb V_k)\setminus \{(G)\}$, then $(H_o')>(H_o)$ for $(H'_o)\in \Phi_0(G,\mathscr E)\setminus \{(G)\}$ if and only if  $(H_o')$ is maximal in  $\Phi_0(G,\mathbb V_{pk})\setminus \{(G)\}$ for some $p > 1$ and $\phi_p(H_o')=(H_o)$. To be more precise, in this a case, if $(H_o)$ is a maximal orbit type in $\cV_{k,l}^-\setminus\{0\}$, then $(H_o')$ is a maximal orbit type in $\cV_{pk,l}^- \setminus \{0\}$. 
Moreover, the statement following below shows that if a maximal type is presented in  $\deg{\cV_{k,l}^-}$, then it ``survives" after the multiplication by 
a ``folded" basic degree.  

\begin{lemma}\label{lem:max-folding}
Suppose that $(H_o)$ is a maximal orbit type in $\cV_{k,l}^-\setminus\{0\}$ and  
let $(H_o')$ be the maximal type in $\cV_{pk,l'}^-\setminus \{0\}$ 
such that $\phi_p(H_o') = (H_o)$ (see Definition \ref{def:extended-maximal}). Then:
\begin{itemize}
\item[(i)] $\deg{\cV_{k,l}^-}$ and $\deg{\cV_{pk,l'}^-}$ share the same coefficient standing 
by $(H_o)$ and $(H_o')$, respectively;
\item[(ii)] if, in addition, the dimension of $\cV_{k,l}^{-H_o}$ is odd (cf. Lemma \ref{lem:same-maximal}), and $x_o$ is the coefficient standing by 
$(H_o)$ in  $  \deg{\cV_{k,l}^-}$, then  the coefficient standing by  $(H_o)$ in $  \deg{\cV_{k,l}^-}\cdot \deg{\cV_{k',l'}^-}$ is equal to $-x_o$ (in particular,
different from zero). 
\end{itemize}
\end{lemma}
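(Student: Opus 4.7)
My plan for part (i) is to compare the two pieces of data entering formula \eqref{eq:coef-x-o-ireduc}: the parity of $\dim(\cV_{k,l}^-)^{H_o}$ and the order of $W(H_o)$. The key observation is that, as a $G$-representation, $\cV_{pk,l'}^-$ coincides with the pull-back $\phi_p^{\ast}\cV_{k,l'}^-$, since $\mu_p^{\ast}\cW_k \cong \cW_{pk}$ while $\phi_p$ is the identity on the $\Gamma \times \mathbb Z_2$-factor (and in particular $l' = l$, as in the discussion preceding the lemma). Pull-back converts fixed-point subspaces, so that
\[
(\cV_{pk,l}^-)^{H_o'} \;=\; (\cV_{k,l}^-)^{\phi_p(H_o')} \;=\; (\cV_{k,l}^-)^{H_o},
\]
yielding equal dimensions, hence equal parities. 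A parallel argument shows that $\phi_p$ sends $N_G(H_o')$ into $N_G(H_o)$ and descends to an isomorphism $W(H_o')\cong W(H_o)$, the kernel $\ker\phi_p \cong \mathbb Z_p$ lying inside $H_o'$ by maximality of $(H_o')$. Both inputs to \eqref{eq:coef-x-o-ireduc} then agree for $(H_o')$ and $(H_o)$, and the common coefficient is $x_o$.

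For part (ii), my plan is to use the multiplicativity property \eqref{eq:mult-property} to rewrite
\[
\deg_{\cV_{k,l}^-}\cdot\deg_{\cV_{pk,l'}^-} \;=\; \gdeg\bigl(-\id,\, B(\cV_{k,l}^-\oplus \cV_{pk,l'}^-)\bigr),
\]
and then extract the coefficient of $(H_o)$ via the Recurrence Formula \eqref{eq:rec}. Since $(H_o)$ is maximal in $\cV_{k,l}^-\setminus\{0\}$, the only orbit type strictly above $(H_o)$ that contributes to the recurrence, apart from $(G)$, is $(H_o')$, whose coefficient is $-x_o$ by part (i). The remaining input is
\[
d_{H_o} \;=\; (-1)^{\dim(\cV_{k,l}^-)^{H_o}+\dim(\cV_{pk,l'}^-)^{H_o}},
\]
whose first exponent is odd by hypothesis. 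Identifying $\dim(\cV_{pk,l'}^-)^{H_o}$ with $\dim(\cV_{k,l}^-)^{\phi_p(H_o)}$ and showing it is even gives $d_{H_o} = -1$, and a short case check in each of $|W(H_o)| = 2$ (so $x_o = 1$) and $|W(H_o)| = 1$ (so $x_o = 2$) yields the asserted value $-x_o$.

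As an alternative to invoking multiplicativity, one can directly expand $\bigl((G) - x_o(H_o) + a\bigr)\bigl((G) - x_o(H_o') + a'\bigr)$, with $a$ and $a'$ supported strictly below $(H_o)$ and $(H_o')$ respectively, and collect the coefficient of $(H_o)$ term by term using the Burnside-ring multiplication formula; in particular $(H_o)\cdot(H_o')$ is treated in analogy with the computation of $(H_o)\cdot(H_o)$ in the proof of Lemma \ref{lem:same-maximal}. The main obstacle in either approach is the parity of $\dim(\cV_{k,l}^-)^{\phi_p(H_o)}$, which governs the sign of $d_{H_o}$ and hence of the final coefficient. Establishing this parity demands a careful representation-theoretic analysis of how the $p$-folding shifts the $SO(2)$-component of the stabilizer $H_o$, exploiting the maximality of $(H_o)$ inside $\cV_{k,l}^-$.
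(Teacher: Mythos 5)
Your part (i) is fine and is essentially the paper's argument (the paper merely records $|W(H_o)|=|W(H_o')|$ and cites the formula for $x_o$; your pull\--back identification $\cV_{pk,l}^-\cong\phi_p^{\ast}\cV_{k,l}^-$, $(\cV_{pk,l}^-)^{H_o'}=(\cV_{k,l}^-)^{H_o}$ just makes the implicit parity statement explicit). The problem is part (ii). Your primary route — multiplicativity plus the Recurrence Formula applied to $B(\cV_{k,l}^-\oplus\cV_{pk,l'}^-)$ — hinges on the value $d_{H_o}=(-1)^{\dim(\cV_{k,l}^-)^{H_o}+\dim(\cV_{pk,l'}^-)^{H_o}}$, and you explicitly leave the parity of $\dim(\cV_{pk,l'}^-)^{H_o}$ unproved, calling it the ``main obstacle''. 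That is a genuine gap, and worse, the parity you aim for is the wrong one: if $\dim(\cV_{pk,l'}^-)^{H_o}$ were even, then $d_{H_o}=-1$ and the recurrence gives $n_{H_o}=\big(-1-1+x_o|W(H_o)|\big)/|W(H_o)|$, which is $0$ in both cases $(x_o,|W(H_o)|)=(1,2)$ and $(2,1)$ — contradicting the lemma, so your ``short case check yielding $-x_o$'' cannot be right as stated. (A model computation with $G=O(2)$, $\cW_1$, $\cW_2$, $H_o=D_1$, $H_o'=D_2$ shows what actually happens: $\dim\cW_2^{D_1}=1$ is odd, so $d_{H_o}=+1$ and the recurrence returns the nonzero coefficient, in agreement with the Burnside-ring product $\big((G)-(D_1)\big)\cdot\big((G)-(D_2)\big)=(G)+(D_1)-(D_2)$.) You would also need to justify that no orbit type other than $(G)$ and $(H_o')$ lies strictly above $(H_o)$ with nonzero coefficient; your maximality argument only rules out points with nonzero $\cV_{k,l}^-$-component.

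The paper avoids all of this: its proof of (ii) is the route you mention only as an ``alternative''. One expands $\big((G)-x_o(H_o)+a\big)\cdot\big((G)-x_o(H_o')+a'\big)$ in $A(G)$, computes $(H_o')\cdot(H_o)=|W(H_o)|\,(H_o)+d$ from $n(H_o,H_o')=1$, $n(H_o,H_o)=1$ and $|W(H_o')|=|W(H_o)|$, and checks $-x_o+x_o^2|W(H_o)|=x_o$ in the two cases. No fixed-point dimension of $H_o$ in $\cV_{pk,l'}^-$ enters at all — the needed parity information is already encoded in the coefficients supplied by part (i) — so your closing claim that ``the main obstacle in either approach'' is the parity of $\dim(\cV_{k,l}^-)^{\phi_p(H_o)}$ mislocates the difficulty. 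To repair your write-up, either carry out the Burnside-ring expansion as in the paper (checking that the terms involving $a$, $a'$ contribute nothing at $(H_o)$), or, if you insist on the recurrence route, prove that $\dim(\cV_{pk,l'}^-)^{H_o}$ is odd (not even), so that $d_{H_o}=+1$.
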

\begin{proof} (i) Take  
\begin{equation}\label{eq:deg-prod-folde}
\deg{\cV_{k,l}^-} =   (G) - x_o(H_o) + a \quad \text{and} \quad \deg{\cV_{pk,l'}^-} = (G) - x_o'(H_o')+ a', 
\end{equation}
where $a,a' \in A(G)$ have zero coefficients standing by $(H_o)$ and $(H_o')$, respectively. Since $|W(H_o)|=|W(H_o')|$, the statement follows from  \eqref{eq:coef-x-o-ireduc}.

\smallskip

(ii)
Under the notations of (i), one has
\begin{align*}
\deg{\cV_{k,l}^-}\cdot \deg{\cV_{pk,l'}^-}
&= (G) - x_o(H_o) - x_o(H_o') + x_o^2(H_o)\cdot (H_o')+b,
\end{align*}
where $b\in A(G)$ is the element with zero coefficients standing by $(H_o)$ and $(H_o')$. Then, again by Recurrence Formula, one obtains: 
\[
(H'_o)\cdot(H_o)=y_o(H_o)+ d, \quad y_o=\frac{n(H'_o,H_o)n(H_o,H_o)|W(H'_o)|\, |W(H_o)|}{|W(H_o)|}= |(W(H_o)|,
\]
where $d \in A(G)$ is an element with zero coefficients standing by $(H_o)$ and $(H_o')$. Hence, 
  \[
  -x_o(H_o)+x_o^2(H_o)\cdot (H_o')=\begin{cases}
 -1+2 & \text{if } x_o=1, \;\; |W(H_o)|=2\\
  -2+4  & \text{if } x_o=2, \;\; |W(H_o)|=1\\
  \end{cases}\cdot (H_o)=x_o (H_o).
  \]
Consequently, 
\[
\deg{\cV_{k,l}^-}\cdot \deg{\cV_{pk,l'}^-} = (G) - x_o(H_o') + x_o(H_o)+b.
\]
 \end{proof}
 We summarize the above discussion in the statement following below. To this end, we need additional notations. 
\begin{definition}\label{def:coef-H0} 
   (i) For any $(H_o)\in \Phi_0(G)$, define the function $\text{coeff}^{H_o} : A(G)\to \bz$ assigning  to any  $a=\sum_{(H)} n_H(H) \in A(G)$  the coefficient  $n_{H_o}$  standing by $(H_o)$. 

\smallskip

(ii) Given 
an orbit  type $(H_o) \in \Phi_0(G,\mathscr E)$ of maximal kind (see Definition \ref{def:extended-maximal}(a)) and $k = 0,1,2,\dots,$ define the integer 
\begin{equation}\label{eq:parity}
\mathfrak n^{H_o}_k:=\sum_{l=0}^{\mathfrak r} \mathfrak l^{H_o}_{k,l} \cdot m_{k,l}, 
\end{equation}
where  $m_{k,l}$ is given by \eqref{mtpl}   and
\begin{equation}
\mathfrak l^{H_o}_{k,l}  :=\begin{cases}
1 &\text{ if } \;  \text{dim}\cV_{k,l}^{-H_o} \;\; \text{is odd}\\ 
0 &\text{ otherwise}
\end{cases} 
\end{equation}
(cf. formulas \eqref{eq6}--\eqref{eq:coef-x-o-ireduc}).
\end{definition}    
The following statement is an immediate consequence of Remark \ref{rem:odd-powers-only} and Lemmas \ref{lem:same-maximal} and \ref{lem:max-folding}.     
\begin{lemma}\label{lem:comp-1} Let $(H_o) \in \Phi_0(G,\mathscr E)$ be an orbit type of
maximal kind  (see Definition \ref{def:extended-maximal}(a)) and assume that for some $k\ge 0$, the number $\mathfrak n^{H_o}_k$ is odd 
(see Definition \ref{def:coef-H0}). Then,
\begin{equation}\label{eq:formula1}
\text{\rm coeff\,}^{H_o}\Big(   \gdeg(\scrA,B({\scrE}))  \Big)=\pm x_o,
\end{equation}
where $x_o$ is given by 
\eqref{eq:coef-x-o-ireduc}.
\end{lemma}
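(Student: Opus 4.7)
The plan is to expand $\gdeg(\scrA,B({\scrE}))$ via \eqref{eq6} as a product of basic degrees and then to track how the coefficient at $(H_o)$ evolves under each pairwise multiplication, using Lemmas \ref{lem:same-maximal} and \ref{lem:max-folding} as the essential building blocks.

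First I would apply Remark \ref{rem:odd-powers-only} to reduce \eqref{eq6} to a finite product $\prod_{(k,l)\in S}\deg_{\cV_{k,l}^-}$, where $S:=\{(k,l):m_{k,l}\text{ is odd}\}$. By formula \eqref{eq:coef-x-o-ireduc}, a factor $\deg_{\cV_{k,l}^-}$ contributes a nonzero coefficient $-x_o$ at $(H_o)$ precisely when $(H_o)$ is maximal in $\cV_{k,l}^-\setminus\{0\}$ and $\mathfrak l^{H_o}_{k,l}=1$. In the remaining factors, either the coefficient at $(H_o)$ vanishes outright, or the relevant maximal orbit type is a folded one, $(H_o')$ with $\phi_p(H_o')=(H_o)$, as discussed just before Lemma \ref{lem:max-folding}.

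Next I would combine factors in two stages. Stage one: for each fixed level $k$, multiply together all the factors $\deg_{\cV_{k,l}^-}$ indexed by that $k$. Lemma \ref{lem:same-maximal}(ii) shows that combining two such factors that both carry a $-x_o$ at the same maximal $(H_o)$ produces a product with zero coefficient at $(H_o)$; iterating this, the coefficient of $(H_o)$ in $\prod_{l:(k,l)\in S}\deg_{\cV_{k,l}^-}$ equals $-x_o$ when $\mathfrak n^{H_o}_k$ is odd and $0$ when it is even. Stage two: multiply these level-wise partial products across distinct values of $k$, invoking Lemma \ref{lem:max-folding} to control the interaction with factors carrying the folded maximal type $(H_o')$ at levels $pk$; each such pairing flips the sign of the $(H_o)$-coefficient but preserves its magnitude $x_o$.

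The hard part will be stage two: when several levels contribute simultaneously, one must verify that the accumulated sign flips, together with the quadratic corrections $x_o^2|W(H_o)|\in\{2,4\}$ generated by the products $(H_o)\cdot(H_o)$ and $(H_o)\cdot(H_o')$ through the Recurrence Formula \eqref{eq:rec}, always collapse to a net coefficient of exactly $\pm x_o$ rather than to a larger integer or to zero. The essential algebraic cancellation $-x_o+x_o^2|W(H_o)|=\pm x_o$ has already been verified in the two-factor case inside the proofs of Lemmas \ref{lem:same-maximal}(ii) and \ref{lem:max-folding}(ii); propagating it through an induction on $|S|$, and using the hypothesis that $\mathfrak n^{H_o}_k$ is odd for at least one $k$ to rule out total cancellation, delivers the stated identity.
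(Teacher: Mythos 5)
Your proposal takes essentially the same route as the paper: the paper's entire proof of this lemma is the single observation that it is an immediate consequence of Remark \ref{rem:odd-powers-only} and Lemmas \ref{lem:same-maximal} and \ref{lem:max-folding}, which is exactly the reduction to odd exponents plus the pairwise-product bookkeeping (same-level cancellation via Lemma \ref{lem:same-maximal}, folded levels via Lemma \ref{lem:max-folding}) that you spell out. Your stage-one/stage-two elaboration, including the induction on the number of factors that you flag as the hard part, is detail the paper leaves implicit rather than a departure from its argument.
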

      
\section{Main Results and Example}\label{sec:main}\label{sec:main-results-example}
 \subsection{Main result: non-degenerate version}
In this section, we will present our main results and describe a class of illustrating examples with $G = O(2) \times D_6 \times \mathbb Z_2$.      The ``non-degenerate'' version of the main result is:
\begin{theorem}\label{th:main1}
Assume that $f : \bfV^m \to \bfV$ satisfies conditions (R), \ref{c1}\---\ref{c3} and ($A_4'$). Assume, in addition, that  $0 \not\in \sigma(\scrA)$,
where  $\sigma(\scrA)$ is given by \eqref{eq:spectru}--\eqref{spcta} (see also \eqref{eq:epsilon-m}). Assume, finally, that there  
exist  $k\in \bn$ and an orbit type $(H_o)$ in $\Phi_0(G,\mathscr E)$ of maximal kind such that  
$\mathfrak n^{H_o}_k$  is odd (see Definitions \ref{def:extended-maximal}(a) and \ref{def:coef-H0}). 

Then, system  \eqref{eq2} admits a non-constant $2\pi$-periodic solution with the extended orbit type $(H_o)$.
\end{theorem}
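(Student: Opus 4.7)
The plan is to deduce Theorem~\ref{th:main1} as a direct application of the abstract existence result Proposition~\ref{th:abstract} by using Lemma~\ref{lem:comp-1} to locate a nonzero coefficient in the Burnside-ring element $\omega = (G) - \gdeg(\scrA, B(\scrE))$. First I would verify that the setup of Proposition~\ref{th:abstract} is in place: condition $(A_4')$ implies \ref{c5} (as noted immediately after $(A_4')$), so (R) and \ref{c1}--\ref{c5} all hold; and the hypothesis $0 \notin \sigma(\scrA)$ makes $\gdeg(\scrA, B(\scrE))$ well-defined and validates Lemma~\ref{lm33}, which underlies the homotopy argument inside Proposition~\ref{th:abstract}.

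The crux of the argument is extracting the coefficient of $(H_o)$ in $\omega$. By Lemma~\ref{lem:comp-1}, since $\mathfrak n^{H_o}_k$ is odd for some $k$, one has
\begin{equation*}
\text{\rm coeff\,}^{H_o}\bigl(\gdeg(\scrA, B(\scrE))\bigr) = \pm x_o \quad \text{with } x_o \in \{1,2\}.
\end{equation*}
Since $(H_o)$ is of maximal kind, it is the isotropy of some nonzero vector in $\mathbb V_k$ with $k \ge 1$, hence $(H_o) \ne (G)$; consequently the subtraction in $\omega = (G) - \gdeg(\scrA, B(\scrE))$ preserves this nontrivial term, giving $\text{\rm coeff\,}^{H_o}(\omega) = \mp x_o \ne 0$. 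This identifies $(H_o)$ with one of the orbit types $(H_j)$ appearing in the expansion \eqref{eq:degree-actual} with $n_j \ne 0$.

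Applying parts (a) and (c) of Proposition~\ref{th:abstract} with $(H_j) = (H_o)$ then produces a $G$-orbit of $2\pi$-periodic solutions $x \in \scrE \setminus \{0\}$ to \eqref{eq2} with extended orbit type $(H_o)$. For non-constancy I would invoke Remark~\ref{rem:non-constant-solutions}: since $\mathbb V_k$ for $k \ge 1$ carries a nontrivial $SO(2)$-action by $k$-folded rotations, the isotropy $H_o = G_u$ of a vector $u \ne 0$ in $\mathbb V_k$ cannot contain $O(2) \times \{1\} \times \{1\}$; combined with $\Gamma$ finite, this also forces $H_o$ itself to be finite, so Proposition~\ref{th:abstract}(b) applies as well.

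The main difficulty has in fact been absorbed upstream: the a priori bounds of Lemmas~\ref{lm31}--\ref{lm32} drive the degree deformation used in Proposition~\ref{th:abstract}, while the delicate combinatorial work lies in Lemmas~\ref{lem:same-maximal}--\ref{lem:max-folding}, which track how basic degrees $\deg_{\cV_{k,l}^-}$ interact multiplicatively and culminate in the clean criterion of Lemma~\ref{lem:comp-1}. Once that coefficient count is in hand, Theorem~\ref{th:main1} becomes a packaging statement; the only point requiring care is the verification that $(H_o) \ne (G)$ so that the cancellation inherent in forming $\omega$ does not destroy the nonzero coefficient, which is where the restriction $k \ge 1$ enters.
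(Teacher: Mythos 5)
Your proposal is correct and follows essentially the same route as the paper, whose proof of Theorem~\ref{th:main1} is exactly the combination of Lemma~\ref{lem:comp-1} with Proposition~\ref{th:abstract}. The extra bookkeeping you supply (checking $(H_o)\neq(G)$ so the coefficient survives in $\omega=(G)-\gdeg(\scrA,B(\scrE))$, and deducing finiteness of $H_o$ from the nontrivial $k$-folded $SO(2)$-action on $\mathbb V_k$, $k\ge 1$, to get non-constancy via Remark~\ref{rem:non-constant-solutions}) is consistent with what the paper leaves implicit.
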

\begin{proof}
The proof follows immediately from Lemma \ref{lem:comp-1}  and Proposition \ref{th:abstract}(c).
\end{proof}

\subsection{Example} 
\label{sec:example}
\
{\bf (a) Growth at infinity.} We start with describing a class of maps satisfying condition $(A_3)$.
Take $\bfV:=\br^n$  equipped with the norm {\it max}, and consider 
a map $F : \bfV\times \bfV^{m-1}\to \bfV$ given by
\begin{equation}\label{eq:map-F}
F(x,\bold y)=(p_1(x,\bold y),p_2(x,\bold y),...,p_n(x,\bold y))^T\in \bfV  \quad\quad  (x \in V, \; \bold y \in \bfV^{m-1}),
\end{equation}
where 
\begin{equation}\label{eq:map-p}
p_s(x,\bold y)=x_s^rQ_s(x,\bold y)+q_s(x,\bold y) \quad\quad (s = 1,...,n) ,
\end{equation}
$r\ge 5$ is an odd integer, $Q_s(x,\bold y)$ is a polynomial which is positive 
for all $x$ and $\bold  y$, 
and  $q_s(x,\bold y)$ is a homogeneous polynomial of degree $3 \le  t < r$ (we assume that $t$ is also odd). Choose (for a moment, arbitrary) matrices 
$A_j:\bfV \to\bfV$, $j=0,1, \dots, m-1$, and define 
\begin{equation}\label{eq:map-f}
f(x,\bold y):=A_0x+\sum_{j=1}^{m-1} A_j y^j +F(x,\bold y) \quad\quad (x\in \bfV, \; \bold y\in \bfV^{m-1}).
\end{equation}
Put   $q_o=\min\{Q_s( x,\bold y): s = 1,\dots,n\}$. Since $Q_s$ is a polynomial, $q_o>0$, thus, for $|\bold y|\le |x|$,  one has: 
\begin{align*}
x\bullet f(x,\bold y)&=x\bullet A_0 x+x\bullet \sum_{j=1}^{m-1} A_j y^j +\sum_{s=1}^n\Big({x_s^{r+1}Q_s(x,\bold y)})+x_sq_s(x,\bold y)\Big)\\
&\ge\sum_{s=1}^nx_s^{r+1}q_o-|A_0||x|^2-\sum_{j=1}^{m-1} |A_j|\,| y^j||x|-\sum_{s=1}^n|x_sq_s(x,\bold y)|\\
&\ge q_o|x|^{r+1}-\left(|A_0|+\sum_{j=1}^{m-1} |A_j|\right)\,|x|^2-C|x|^{t+1}-D\\ 
\end{align*}
Since  $  r\ge 5$ and $ t < r$, it follows that  
there exists a constant $R>0$ such that if $|x|>R$, then 
\begin{align*}
x\bullet f(x,\bold y)&\ge q_o|x|^{r+1}-\left(|A_0|+\sum_{j=1}^{m-1} |A_j|\right)\,|x|^2-C|x|^{t+1}-D>0,
\end{align*}
which implies that the assumption ($A_3$) is satisfied.

\medskip

{\bf (b) Symmetries.}  Assume, further, that $\bfV=\br^n$ is an orthogonal $\Gamma$-representation, where $\Gamma\le S_n$ acts on the vectors in $\br^n$ by permuting their coordinates. Then, one can easily identify the $\Gamma$-symmetric interactions between the coordinates in $\bfV$. Following these interactions, one can easily chose monomials for $Q_s$ and $q_s$, $s = 1,...,n$, in such a way that  the map $f$ given by \eqref{eq:map-f} satisfies condition ($A_1$). 

Assume, in addition, that the $\Gamma$-isotypic  decomposition of $\bfV$ is given by
\[
\bfV = V_0 \oplus \dots\oplus V_{\mathfrak r},
\]
where $V_l$, $l=0,1,2,\dots,\mathfrak r$, is equivalent to an irreducible $\Gamma$-representation $\cV_l$ of a real type. Then clearly, for each $j=0,1,2,\dots,m-1$ and $l=0,1,2,\dots,\mathfrak r$, there exists $\mu_j^l\in \br$ such that
\[
A_j|_{V _l}=\mu_j^l\, \id_{V_l}
\]
(just consider any $A_j$ in $V$ in an ``isotypic" basis). Then, since $t > 2$, it follows that the map $f$ also satisfies ($A_4'$). 

Observe that condition ($A_2$) is nicely compatible with other assumptions on $f$ listed in item (a) -- it is enough to assume, in addition, that $Q_s$ is an even polynomial for all $s$. Also, condition $(R)$ can be easily satisfied. 

\medskip

{\bf (c)  $O(2)\times D_6\times \mathbb Z_2$-equivariant problems}.  Let $\Gamma :=D_6$ be the dihedral group given by
\[ 
D_6:=\{1,\gamma,\gamma^2,\gamma^3,\gamma^4,\gamma^5,\kappa,\gamma\kappa,\gamma^2\kappa,\gamma^3\kappa,\gamma^4\kappa,\gamma^5\kappa\},\]
 with  $\gamma:=e^{\frac{\pi i}3}$ and $\kappa z=\overline z$, $z\in \mathbb C$.  Let $\bfV := \bbR^6$ and 
assume that $\Gamma$ acts on $\bfV$ by permuting the coordinates of the vectors $x= (x_1,x_2,\dots, x_6)$ the same way as it permutes vertices of a regular hexagon on the plane. Take $ \bfV^6=\bfV\mathop{\underbrace{\times\cdots\times}\limits_{6\times}}\bfV$ and assume that $\Gamma$ acts diagonally on it. The list of irreducible $D_6$-representation is given by the 
table of their characters:
   \begin{table}[H]
    	\center
    	\begin{tabular}{lllllll}	
    		\toprule
    		&$(1)$&$(\kappa)$&$(\gamma)$&$(\gamma^2)$&$(\kappa \gamma)$&$(\gamma^3)$\\
    		\hline
    		$\chi_1$ & 1& 1& 1& 1& 1& 1\\
    		$\chi_2$ & 1& -1& -1& 1& 1& -1 \\
    		$\chi_3$ & 1& -1& 1& 1& -1& 1\\
    		$\chi_4$ & 1& 1& -1& 1& -1& -1\\
    		$\chi_5$ & 2& 0& 1& -1& 0& -2\\
    		$\chi_6$ & 2& 0& -1& -1& 0& 2\\
    		\bottomrule
    	\end{tabular}	
    	\caption{Character Table of $D_6$}
    \end{table}	
 \noindent
 Notice that the character  $\chi$ of the representation $\bfV$ is: 
 
 \smallskip
   \begin{center}
    \begin{tabular}{lllllll}	
    		\toprule
    		&$(1)$&$(\kappa)$&$(\gamma)$&$(\gamma^2)$&$(\kappa \gamma)$&$(\gamma^3)$\\
    		\hline
    		$\chi$ & 6& 2& 0& 0& 0& 0\\
    		\bottomrule
    	\end{tabular}	
\end{center} 
\smallskip
\noindent
which implies the following $D_6 \times \mathbb Z_2$-isotypic  decomposition of $\bf V$:
\begin{equation}\label{eq:isotyp-decomp-D6}
\bfV:=V_1^-\oplus V_4^-\oplus V_5^- \oplus V_6^-.
\end{equation}

Let $f: \bfV^6\to \bfV$  be given by \eqref{eq:map-f} and satisfy conditions \ref{c1}\---\ref{c3}
(i.e. in this case $m=6$ and $n=6$). Assume that 
\begin{equation}\label{eq:Df-0}
Df(0)=[d\hat{A},a\hat{A},b\hat{A},c\hat{A},b\hat{A},a\hat{A}],
\end{equation}
where 
    \begin{align}
     \hat{A}=\begin{bmatrix}\label{eq:marix-hat-A}
    -1&1/10&0&0&0&1/10\\
    1/10&-1&1/10&0&0&0\\
    0&1/10&-1&1/10&0&0\\
    0&0&1/10&-1&1/10&0\\
    0&0&0&1/10&-1&1/10\\
    1/10&0&0&0&1/10&-1
    \end{bmatrix} ,
\end{align}
and the numbers $a$, $b$, $c$ and $d$ will be specified later.  One can easily verify that  $\hat{A}$ is $\Gamma$-equivariant and 
\begin{equation}\label{eq:spectr-hatA}
\sigma(\hat{A})= \{_1\mu=-8/10, \; _2\mu=-9/10,\;  _3\mu=-11/10,\;  _4\mu=-12/10\}.
\end{equation}  
Since each isotypic  component in \eqref{eq:isotyp-decomp-D6} is irreducible, it follows that condition ($A_4'$) is automatically satisfied,
and one can verify that: 
   \begin{align*}
  \mu_j^1&:=\begin{cases}
  d\cdot {_1\mu}&\text{ if } j=0\\
    a\cdot {_1\mu}&\text{ if } j=1,5\\
      b\cdot {_1\mu}&\text{ if } j=2,4\\
        c\cdot {_1\mu}&\text{ if } j=3
  \end{cases} \quad
  \mu_j^4:=\begin{cases}
  d\cdot {_3\mu}&\text{ if } j=0\\
    a\cdot {_3\mu}&\text{ if } j=1,5\\
      b\cdot {_3\mu}&\text{ if } j=2,4\\
        c\cdot {_3\mu}&\text{ if } j=3
  \end{cases}
\\
  \mu_j^5&:=\begin{cases}
  d\cdot {_2\mu}&\text{ if } j=0\\
    a\cdot {_2\mu}&\text{ if } j=1,5\\
      b\cdot {_2\mu}&\text{ if } j=2,4\\
        c\cdot {_2\mu}&\text{ if } j=3
  \end{cases}
  \quad
  \mu_j^6:=\begin{cases}
  d\cdot {_4\mu}&\text{ if } j=0\\
    a\cdot {_4\mu}&\text{ if } j=1,5\\
      b\cdot {_4\mu}&\text{ if } j=2,4\\
        c\cdot {_4\mu}&\text{ if } j=3
  \end{cases}
   \end{align*}
(cf. \eqref{eq:spectr-hatA}).   
Also (cf. \eqref{eq:m-l}),  $m^l = 1$ for all $l=1,4,5,6$. 
In addition (cf. \eqref{eq:Ak}-\eqref{spcta}), =
one has:
    \[
    d+a\gamma^k+b\gamma^{2k}+c\gamma^{3k}+b\gamma^{4k}+a\gamma^{5k}=\begin{cases}
    d+2a+2b+c& k=0\text{ mod }6\\
    d+a-b-c&k=1\text{ mod }6\\
    d-a-b-c&k=2\text{ mod }6\\
    d-2a+2b-c&k=3\text{ mod }6\\
    d-a-b+c&k=4\text{ mod }6\\
    d+a-b-c&k=5\text{ mod }6
    \end{cases}
    \]
Take 
\begin{equation}\label{eq:a-b-c-d}
a:=4, \quad b:=1,\quad c:=3, d:=6.9.
\end{equation}
Then (cf. \eqref{eq:eigen-kl}-\eqref{ieqk}),  one has the following table:
 \begin{table}[H]
  	\center
  	\begin{tabular}{lllll}	
  		\toprule
  		\multicolumn{5}{c}{$\text{sign\,}\xi_{k,l}$}\\	
  		\hline 
  		$k\backslash l$&1&4&5&6\\
  		\hline
  		0&$-$&$-$&$-$&$-$\\
  		1&$ -$& $-$& $-$& $-$ \\
  		2&$+$&$-$&$-$&$-$\\
  		3&$+$&$+$&$+$&$+$\\
  		\bottomrule
  	\end{tabular}	
  	\caption{Eigenvalues of $\scrA$}
  \end{table}	
\noindent 
Notice that there is no non-negative eigenvalue for $k>2$. Also, $0 \not\in \scrA$. Therefore (cf. \eqref{eq6}),

  \begin{align}
    \gdeg(\scrA,B({\scrE}))& = \mbdeg_{\cV_{0,1}^-}\cdot   \mbdeg_{\cV_{0,4}^-}  \cdot   \mbdeg_{\cV_{0,5}^-}   \cdot   \mbdeg_{\cV_{0,6}^-}           
    \cdot   \mbdeg_{\cV_{1,1}^-}   \cdot  \mbdeg_{\cV_{1,4}^-} \cdot  \mbdeg_{\cV_{1,5}^-} \cdot \mbdeg_{\cV_{1,6}^-}\notag\\
    &\cdot \mbdeg_{\cV_{2,4}^-} \cdot \mbdeg_{\cV_{2,5}^-} \cdot \mbdeg_{\cV_{2,6}^-}. 
    \label{eq:deg-final}
  \end{align}
The orbit types of maximal kind occurring in $\cV_{k,l}^-$ 
with $k > 0$ and $l = 1,4,5,6$ related to \eqref{eq:deg-final} are: \newpage
\begin{equation}\left.
\begin{aligned}
\cV_{1,1}^- \,& : \;\;\;\;      (\amal{D_2}{D_1} {\bz_2} {D_6^z}{D_6^p});\\
\cV_{1,4}^- \, &: \;\;\;\;   (\amal{D_2}{D_1} {\bz_2}{ D_3^p}{D_6^p});\\
\cV_{1,5}^- \, &: \;\;\;\;  (\amal{D_6}{\bz_1} {D_6}{\bz_2^-}{D_6^p}), \; (\amal{D_2}{D_1} {\bz_2} {D_2^d}{D_2^p}), \; 
(\amal{D_2}{D_1}{\bz_2}{\wt D_2^d}{D_2^p});\\
\cV_{1,6}^- \, &: \;\;\;\; (\amal{D_6}{\bz_1} {D_6} {\wt D_1}{D_6^p}),  \; (\amal{D_2}{D_1}{\bz_2}{ D_2^z}{D_2^p}), \;\; (\amal{D_2}{D_1}{\bz_2}{ D_2}{D_2^p});\\
\cV_{2,4}^- \, &: \;\;\;\;  (\amal{D_4}{D_2} {\mathbb Z_2} {D_3^p}{D_6^p});\\
\cV_{2,5}^- \, &: \;\;\;\; (\amal{D_{12}}{\bz_2} {D_6}{\bz_2^-}{D_6^p}), \;  (\amal{D_4}{D_2} {\mathbb Z_2} {D_2^d}{D_2^p}), \; (\amal{D_4}{D_2 }{\bz_2} {\wt D_2^d}{D_2^p});\\
\cV_{2,6}^- \, &: \;\;\;\;   (\amal{D_{12}}{\bz_2} {D_6} {\wt D_1}{D_6^p}), \; (\amal{D_4}{D_2} {\mathbb Z_2} {D_2^z}{D_2^p}), \;  (\amal{D_4}{D_2} {\mathbb Z_2} {D_2}{D_2^p})
\end{aligned}
\right\}
\label{eq:maxOrb}
\end{equation}
\medskip
\begin{remark}\label{rem:notations} (i)  For any subgroup $S  \leq D_6$, the symbol $S^p$ stands for $S \times \mathbb Z_2$.  

\smallskip
(ii) Given two subgroups $H \leq O(2)$ and $K \leq D_6^p$, we refer to Appendix, item (a), for the ``amalgamated notation"  $\amal{H}{Z} {L} {R}{K}$. 

\smallskip
(iii) We refer to  \cite{AED} for the explicit description of the (sub)groups $\wt D_k$, $D_k^z$, $D_k^d$, $\wt D_k^d$, and $\mathbb Z_2^-$.
\end{remark}

We summarize our considerations in the statement following below.
\begin{proposition}\label{prop:example-summarize}
Let $\bf V$ be a $D_6 \times \mathbb Z_2$-representation admitting isotypic decomposition \eqref{eq:isotyp-decomp-D6} and let 
$\bfV^6$ be equipped with the diagonal $D_6 \times \mathbb Z_2$-action.  
Let $f: \bfV^6\to \bfV$  be given by \eqref{eq:map-F}-\eqref{eq:map-f} with $Q_s$ being an even polynomial ($s = 1,...,n$), and suppose that 
conditions \text{(R)} and \ref{c1} are satisfied.
Assume, further, that $Df(0)$ is given by \eqref{eq:Df-0}-\eqref{eq:marix-hat-A} and \eqref{eq:a-b-c-d}.  Let $(\mathcal H)$ be one of the orbit types listed
in \eqref{eq:maxOrb}. Then,  system  \eqref{eq2} admits a non-constant $2\pi$-periodic solution with the extended orbit type $(\mathcal H)$.

\end{proposition}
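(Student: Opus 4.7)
The plan is to check that all hypotheses of Theorem \ref{th:main1} are satisfied by the data specified in \eqref{eq:Df-0}\---\eqref{eq:a-b-c-d} and that the list \eqref{eq:maxOrb} is precisely the collection of maximal-kind orbit types $(\mathcal H)$ for which the integer $\mathfrak n^{\mathcal H}_k$ (see Definition \ref{def:coef-H0}) is odd at the relevant level $k\in\{1,2\}$. Once this is done, Theorem \ref{th:main1} delivers a non-constant $2\pi$-periodic solution of \eqref{eq2} with extended orbit type $(\mathcal H)$ for each $(\mathcal H)$ listed.

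\textbf{Verification of $(R)$, \ref{c1}\---\ref{c3} and $(A_4')$.} Conditions $(R)$ and \ref{c1} are assumed in the statement. Condition \ref{c2} follows from item (b) of the example: since $Q_s$ is even and $q_s$ is a homogeneous polynomial of odd degree $t$, each $p_s$ given by \eqref{eq:map-p} is an odd function of its arguments, hence $F$, and therefore $f$, is odd. Condition \ref{c3} is the content of the estimate in item (a) of the example, where the choice $r\ge 5$ with the dominating positive coefficient $q_o$ guarantees $x\bullet f(x,\bold y)>0$ for $|x|>R$, $|y^j|\le|x|$. Finally, $(A_4')$ is automatic: the $D_6\times\mathbb Z_2$-isotypic decomposition \eqref{eq:isotyp-decomp-D6} consists of \emph{irreducible} components $V_1^-,V_4^-,V_5^-,V_6^-$ of real type, so each $D_6$-equivariant operator $A_j|_{V_l^-}$ must act as a scalar $\mu_j^l \id$ by Schur's lemma; moreover $\hat A$ is $D_6$-equivariant, so the same holds for $d\hat A,a\hat A,b\hat A,c\hat A$, and the scalars $\mu_j^l$ displayed in the example follow from \eqref{eq:spectr-hatA}.

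\textbf{Non-degeneracy of $\scrA$.} With the values $a=4$, $b=1$, $c=3$, $d=6.9$, the spectrum $\sigma(\scrA_k)$ is obtained from \eqref{eq:eigen-kl} by substituting the scalars $\mu_j^l$ computed above and the corresponding values of $\sum_{j=0}^{5}\gamma^{jk}$-combinations listed in the example. A direct substitution produces the sign table displayed after \eqref{eq:a-b-c-d}: $\xi_{k,l}<0$ exactly in the cells marked ``$-$'', no eigenvalue equals zero, and $\xi_{k,l}>0$ for all $k\ge 3$. Thus $0\notin\sigma(\scrA)$, and the non-trivial negative eigenvalues occur only at $(k,l)$ with $k\in\{0,1,2\}$ and the multiplicities $m_{k,l}$ from \eqref{mtpl} are all $0$ or $1$ (because $m^l=1$). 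This reduces $\gdeg(\scrA,B(\scrE))$ to the product \eqref{eq:deg-final}.

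\textbf{Parity of $\mathfrak n_k^{\mathcal H}$ and the main computation.} For each $(\mathcal H)$ in \eqref{eq:maxOrb}, grouped according to the irreducible $\cV_{k,l}^-$ in which it is maximal, I would verify that the sum
\begin{equation*}
\mathfrak n_k^{\mathcal H}=\sum_{l\in\{1,4,5,6\}}\mathfrak l^{\mathcal H}_{k,l}\, m_{k,l}
\end{equation*}
is odd, where $\mathfrak l^{\mathcal H}_{k,l}=1$ iff $\dim\cV_{k,l}^{-\mathcal H}$ is odd. Concretely, I would use the GAP library of \cite{Pin} to produce the dimensions of fixed-point subspaces $\cV_{k,l}^{-\mathcal H}$ for every combination of $k\in\{1,2\}$, $l\in\{1,4,5,6\}$, and every $(\mathcal H)$ in \eqref{eq:maxOrb}, and check that each row yields an odd total. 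When this is established, Lemma \ref{lem:comp-1} gives
$\text{coeff}^{\mathcal H}\bigl(\gdeg(\scrA,B(\scrE))\bigr)=\pm x_o\neq 0$, so the coefficient of $(\mathcal H)$ in $\omega=(G)-\gdeg(\scrA,B(\scrE))$ is non-zero and Proposition \ref{th:abstract}(a),(c) (equivalently Theorem \ref{th:main1}) supplies a non-constant $2\pi$-periodic solution of \eqref{eq2} with extended orbit type $(\mathcal H)$. The non-constancy follows from Proposition \ref{th:abstract}(b) and Remark \ref{rem:non-constant-solutions} since every $(\mathcal H)$ listed is a finite subgroup (each amalgamated subgroup in \eqref{eq:maxOrb} has finite $O(2)$-projection such as $D_2,D_4,D_6,D_{12}$).

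\textbf{Main obstacle.} The nontrivial part is the bookkeeping in the last paragraph: checking the parity of $\dim\cV_{k,l}^{-\mathcal H}$ for each of the eleven pairs $(k,l)$ in \eqref{eq:deg-final} and each orbit type of maximal kind. The twisted amalgamated subgroups (the $\wt D_1,\wt D_2^d,D_2^z,D_2^d,\bz_2^-$ factors that mix the reversing $\kappa\in O(2)$ with elements of $D_6\times\bz_2$) make the direct description of fixed-point spaces delicate, and it is precisely for this computation that the GAP package of \cite{Pin} is invoked. Once the parity table is produced, everything else is a direct application of the framework already assembled in Sections \ref{sec:operator-reform}\---\ref{sec:degree-computation}.
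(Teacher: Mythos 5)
Your proposal is correct and follows essentially the same route as the paper: the paper's "proof" of this proposition is precisely the preceding considerations of Section \ref{sec:example} (verification of (R), \ref{c1}--\ref{c3}, ($A_4'$), the sign table of $\xi_{k,l}$ giving $0\notin\sigma(\scrA)$ and the product \eqref{eq:deg-final}, then the maximal orbit types \eqref{eq:maxOrb} fed into Lemma \ref{lem:comp-1} and Theorem \ref{th:main1}). Your deferral of the fixed-point-dimension parity bookkeeping to the GAP package of \cite{Pin} matches the paper, which likewise relies on the EquiDeg computation of $\gdeg(\scrA,B(\scrE))$ rather than a hand verification.
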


\subsection{Main result: degenerate version}
Using the argument similar to the one utilized in the proof of Theorem \ref{th:main1}, one can establish the following  ``degenerate'' version of the main result. 

\begin{theorem}\label{th:main2}
Assume that $f : \bfV^m \to \bfV$ satisfies conditions (R), \ref{c1}\---\ref{c3} and ($A_4'$). 
Put 
\[
\mathscr C:= \left\{ k\in \bn\cup\{0\}:
k^2  = -\mu_0^l-\sum _{j=1}^r  2\cos \frac{2\pi jk}{m}\mu_j^l+\ve_m\mu_r^l, \;\; l=0,1,2,\dots, \mathfrak r, \;  r:=\left\lfloor\frac {m-1}{2} \right\rfloor \right\}
\] 
and choose 
$s \in \bn$ such that 
\begin{equation}\label{eq:non-degenerate1}
\mathscr C\cap \{ (2k-1)s: k\in \bn\}=\emptyset.
\end{equation}
Assume that there  
exist  $k\in \bn$ and an orbit type $(H_o)$ in $\Phi_0(G,\mathscr E)$ of maximal kind such that  
$\mathfrak n^{H_o}_{(2k-1)s}$  is odd (see Definitions \ref{def:extended-maximal}(a) and \ref{def:coef-H0}).
Then, system  \eqref{eq2} admits a non-constant $2\pi$-periodic solution with the extended orbit type $(H_o)$. 
\end{theorem}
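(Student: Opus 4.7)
The plan is to reduce to the non-degenerate setting of Theorem~\ref{th:main1} by restricting the entire fixed-point problem to a suitable $G$-invariant closed subspace of $\scrE$ on which $\scrA$ is invertible. Condition~\eqref{eq:non-degenerate1} is tailor-made so that the subspace carrying only those Fourier modes of the form $(2k-1)s$ avoids every resonant index in $\mathscr C$.

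Concretely, I would introduce the ``twisted'' cyclic subgroup
\[
H_s \;:=\; \langle (e^{i\pi/s}, 1_{\Gamma}, -1)\rangle \; < \; G
\]
and its fixed-point set $\scrE_s := \scrE^{H_s}$. A short check (using $\kappa e^{i\theta}\kappa = e^{-i\theta}$) shows that $H_s$ is normal in $G$, so $\scrE_s$ is a closed $G$-invariant Banach subspace. The defining condition $x(t+\pi/s) = -x(t)$ forces the non-zero Fourier modes of $x\in\scrE_s$ to be odd multiples of $s$, yielding
\[
\scrE_s \;=\; \overline{\bigoplus_{k=1}^{\infty}\bbV_{(2k-1)s}}, \qquad
\sigma(\scrA|_{\scrE_s}) \;=\; \bigcup_{k=1}^{\infty}\sigma(\scrA_{(2k-1)s}).
\]
By \eqref{eq:non-degenerate1}, $\mathscr C$ meets no $(2k-1)s$, so $0\notin \sigma(\scrA|_{\scrE_s})$ and $\scrA|_{\scrE_s}$ is an isomorphism.

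With this setup in place, I would run the argument of Theorem~\ref{th:main1} verbatim on $\scrE_s$. Assumption (R) together with $\bbZ_2$-equivariance ensures that $\scrF_\lambda$ preserves $\scrE_s$; the a priori bounds of Lemmas~\ref{lm31}--\ref{lm32} apply to any $2\pi$-periodic $C^2$ solution, so $\scrF_\lambda$ is $\Omega_M^s$-admissible on $\Omega_M^s:=\{x\in\scrE_s:\|x\|<M\}$ for $\lambda\in[0,1]$; Lemma~\ref{lm33} (with $\scrA|_{\scrE_s}$ replacing $\scrA$) supplies an $\Omega_\epsilon^s$-admissible homotopy between $\scrF|_{\scrE_s}$ and $\scrA|_{\scrE_s}$. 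Equivariant homotopy invariance and $\gdeg(\id,\Omega_M^s)=(G)$ then give
\[
\gdeg\bigl(\scrF|_{\scrE_s}, \Omega_M^s\setminus \overline{\Omega_\epsilon^s}\bigr) \;=\; (G)-\gdeg(\scrA|_{\scrE_s},B(\scrE_s))\;=:\;\omega_s.
\]
The product formula of Section~\ref{sec:degree-computation} now becomes
\[
\gdeg(\scrA|_{\scrE_s},B(\scrE_s))\;=\;\prod_{k=1}^{\infty}\prod_{l=0}^{\mathfrak r}\bigl(\deg_{\cV_{(2k-1)s,l}^-}\bigr)^{m_{(2k-1)s,l}},
\]
and applying Lemmas~\ref{lem:same-maximal}--\ref{lem:comp-1} to this restricted product shows that the hypothesis ``$\mathfrak n^{H_o}_{(2k-1)s}$ odd'' forces $\text{coeff}^{H_o}(\omega_s) = \pm x_o\neq 0$. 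The existence property of the equivariant degree then produces a non-constant $2\pi$-periodic $x\in\scrE_s$ with $(G_x)\ge (H_o)$; since every non-zero Fourier mode of $x$ is an odd multiple of $s$, the folding $\phi_{(2k-1)s}(G_x)=(H_o)$ delivers the advertised extended orbit type.

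The principal technical obstacle I foresee is verifying the ``mutatis mutandis'' step. One must check that Lemmas~\ref{lem:same-maximal} and~\ref{lem:max-folding}, which control how coefficients standing by a maximal orbit type survive in products of basic degrees, continue to function when the product index is restricted to the arithmetic progression $\{(2k-1)s:k\in\bbN\}$ instead of all of $\bbN$. The worry is that a maximal $(H_o)$ coming from a mode outside this progression could in principle fail to appear, or could cancel against folded contributions from $(2k-1)s$-modes. Once this orbit-type bookkeeping on the restricted product is settled, the remainder of the argument is a routine transcription of the proof of Theorem~\ref{th:main1}.
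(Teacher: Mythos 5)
Your proposal is correct and follows essentially the route the paper intends (and leaves implicit) for Theorem~\ref{th:main2}: restrict to the $G$-invariant fixed-point space of the twisted cyclic subgroup generated by $(e^{i\pi/s},1_\Gamma,-1)$, where only the Fourier modes $(2k-1)s$ survive, condition \eqref{eq:non-degenerate1} makes $\scrA$ invertible, and the argument of Theorem~\ref{th:main1} (Lemmas~\ref{lm31}--\ref{lm33}, the product formula \eqref{eq6} and Lemmas~\ref{lem:same-maximal}--\ref{lem:comp-1}) is repeated verbatim. The worry you raise at the end is not an actual obstacle: the coefficient bookkeeping in Lemmas~\ref{lem:same-maximal} and \ref{lem:max-folding} is performed factor-by-factor in $A(G)$ and is insensitive to restricting the product to the modes $(2k-1)s$, which is exactly why the hypothesis is stated in terms of $\mathfrak n^{H_o}_{(2k-1)s}$.
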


\begin{remark}\label{rem:deg-theorem}
It is easy to extend the setting considered in Proposition \ref{prop:example-summarize} to the one supporting Theorem \ref{th:main2}. We leave this task to the reader. 
\end{remark}

\appendix
\section{Equivariant Brouwer Degree Background}
\label{subsec:G-degree}

\
{\bf (a) Amalgamated notation.} 
Given two groups $G_{1}$ and
$G_{2}$, 
the well-known result of \'E. Goursat (see \cite{DKLP,Goursat}) provides the following description of a
subgroup $\mathscr H \leq G_{1}\times G_{2}$: 
there exist subgroups
$H\leq G_{1}$ and $K\leq G_{2}$, a group $L$, and two epimorphisms
$\varphi:H\rightarrow L$ and $\psi:K\rightarrow L$ such that
\begin{equation*}
\mathscr H=\{(h,k)\in H\times K:\varphi(h)=\psi(k)\}.
\end{equation*}
The widely used notation for $\mathscr H$ is 
\begin{equation}\label{eq:amalgam-projections}
\mathscr H:=H\prescript{\varphi}{}\times_{L}^{\psi}K,
\end{equation}
in which case $H\prescript{\varphi}{}\times_{L}^{\psi}K$ is called an
\textit{amalgamated} subgroup of $G_{1}\times G_{2}$.

In this paper, we are interested in describing conjugacy classes of $\mathscr H$. Therefore, to make notation \eqref{eq:amalgam-projections} simpler and
self-contained, it is enough to indicate $L$,  
$Z=\text{Ker\thinspace}(\varphi)$ and 
$R=\text{Ker\thinspace}(\psi)$. Hence, instead of  
\eqref{eq:amalgam-projections}, we use the following notation:
\begin{equation}
\mathscr H=:H{\prescript{Z}{}\times_{L}^{R}}K~ \label{eq:amalg}.
\end{equation}

\vs
{\bf (b) Equivariant notation.} Below $\mathcal G$ stands for a compact Lie group.
For a subgroup $H$ of $\mathcal G$, 
denote by $N(H)$ the
normalizer of $H$ in $\mathcal G$ and by $W(H)=N(H)/H$ the Weyl group of $H$.  The symbol $(H)$ stands for the conjugacy class of $H$ in $\mathcal G$. 
Put $\Phi(\mathcal G):=\{(H): H\le \mathcal G\}$.
The set $\Phi (\mathcal G)$ has a natural partial order defined by 
$(H)\leq (K)$ iff $\exists g\in \mathcal G\;\;gHg^{-1}\leq K$. 
Put $\Phi_0 (\mathcal G):= \{ (H) \in \Phi(\mathcal G) \; : \; \text{$W(H)$  is finite}\}$.

For a $\mathcal G$-space $X$ and $x\in X$, denote by
$\mathcal G_{x} :=\{g\in \mathcal G:gx=x\}$  the {\it isotropy group}  of $x$
and call $(\mathcal G_{x})$   the {\it orbit type} of $x\in X$. Put $\Phi(\mathcal G,X) := \{(H) \in \Phi_0(\mathcal G) \; : \; 
(H) = (\mathcal G_x) \; \text{for some $x \in X$}\}$ and  $\Phi_0(\mathcal G,X):= \Phi(\mathcal G,X) \cap \Phi_0(\mathcal G)$. For a subgroup $H\leq \mathcal G$, the subspace $
X^{H} :=\{x\in X:\mathcal G_{x}\geq H\}$ is called the {\it $H$-fixed-point subspace} of $X$. If $Y$ is another $\mathcal G$-space, then a continuous map $f : X \to Y$ is called {\it equivariant} if $f(gx) = gf(x)$ for each $x \in X$ and $g \in \mathcal G$. 
Let $V$ be a finite-dimensional  $\mathcal G$-representation (without loss of generality, orthogonal).
Then, $V$  decomposes into a direct sum 
\begin{equation}
V=V_{0}\oplus V_{1}\oplus \dots \oplus V_{r},  \label{eq:Giso}
\end{equation}
where each component $V_{i}$ is {\it modeled} on the
irreducible $\mathcal G$-representation $\mathcal{V}_{i}$, $i=0,1,2,\dots ,r$, that is, $V_{i}$  contains all the irreducible subrepresentations of $V$
equivalent to $\mathcal{V}_{i}$. Decomposition  \eqref{eq:Giso}  is called  $\mathcal G$\textit{-isotypic  decomposition of} $V$.
\vs 
{\bf (b) Axioms of Equivariant Brouwer Degree.} Denote by  $\mathcal{M}^{\mathcal G}$ the set of all admissible $\mathcal G$-pairs and let $A(\mathcal G)$ stand for the Burnside ring of $\mathcal G$ (see Introduction, item (b)). The following result (cf.  \cite{AED}) can be considered as an axiomatic definition of the {\it $\mathcal G$-equivariant Brouwer degree}.

\begin{theorem}
\label{thm:GpropDeg} There exists a unique map $\mathcal G\mbox{\rm -}\deg:\mathcal{M}
^{\mathcal G}\to A(\mathcal G)$, which assigns to every admissible $\mathcal G$-pair $(f,\Omega)$ an
element $\gdeg(f,\Omega)\in A(\mathcal G)$
\begin{equation}
\label{eq:G-deg0}\mathcal G\mbox{\rm -}\deg(f,\Omega)=\sum_{(H)}%
{n_{H}(H)}= n_{H_{1}}(H_{1})+\dots+n_{H_{m}}(H_{m}),
\end{equation}
satisfying the following properties:

\begin{itemize}
\item[] \textbf{(Existence)} If $\mathcal G\mbox{\rm -}\deg(f,\Omega)\ne
0$, i.e., $n_{H_{i}}\neq0$ for some $i$ in \eqref{eq:G-deg0}, then there
exists $x\in\Omega$ such that $f(x)=0$ and $(\mathcal G_{x})\geq(H_{i})$.

\item[] \textbf{(Additivity)} Let $\Omega_{1}$ and $\Omega_{2}$
be two disjoint open $\mathcal G$-invariant subsets of $\Omega$ such that
$f^{-1}(0)\cap\Omega\subset\Omega_{1}\cup\Omega_{2}$. Then,
\begin{align*}
\mathcal G\mbox{\rm -}\deg(f,\Omega)=\mathcal G\mbox{\rm -}\deg(f,\Omega_{1})+\mathcal G\mbox{\rm -}\deg
(f,\Omega_{2}).
\end{align*}

\item[] \textbf{(Homotopy)} If $h:[0,1]\times V\to V$ is an
$\Omega$-admissible $\mathcal G$-homotopy, then
\begin{align*}
\mathcal G\mbox{\rm -}\deg(h_{t},\Omega)=\mathrm{constant}.
\end{align*}

\item[] \textbf{(Normalization)} Let $\Omega$ be a $G$-invariant
open bounded neighborhood of $0$ in $V$. Then,
\begin{align*}
\mathcal G\mbox{\rm -}\deg(\id,\Omega)=(\mathcal G).
\end{align*}

\item[] \textbf{(Multiplicativity)} For any $(f_{1},\Omega
_{1}),(f_{2},\Omega_{2})\in\mathcal{M} ^{\mathcal G}$,
\begin{align*}
\mathcal G\mbox{\rm -}\deg(f_{1}\times f_{2},\Omega_{1}\times\Omega_{2})=
\mathcal G\mbox{\rm -}\deg(f_{1},\Omega_{1})\cdot \mathcal G\mbox{\rm -}\deg(f_{2},\Omega_{2}),
\end{align*}
where the multiplication `$\cdot$' is taken in the Burnside ring $A(\mathcal G )$.

\item[] \textbf{(Recurrence Formula)} For an admissible $\mathcal G$-pair
$(f,\Omega)$, the $\mathcal G$-degree \eqref{eq:G-deg0} can be computed using the
following Recurrence Formula:
\begin{equation}
\label{eq:RF-0}n_{H}=\frac{\deg(f^{H},\Omega^{H})- \sum_{(K)>(H)}{n_{K}\,
n(H,K)\, \left|  W(K)\right|  }}{\left|  W(H)\right|  },
\end{equation}
where $\left|  X\right|  $ stands for the number of elements in the set $X$
and $\deg(f^{H},\Omega^{H})$ is the Brouwer degree of the map $f^{H}%
:=f|_{V^{H}}$ on the set $\Omega^{H}\subset V^{H}$.
\end{itemize}
\end{theorem}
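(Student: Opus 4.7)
The plan is to treat the Recurrence Formula \eqref{eq:RF-0} as the definition of $\mathcal G\text{-}\deg(f,\Omega)$ and then verify that the remaining axioms are satisfied automatically. Uniqueness is then almost immediate: the recurrence is a triangular linear system, downward with respect to the partial order of orbit types, in the unknowns $\{n_H\}_{(H)\in\Phi_0(\mathcal G)}$, with nonzero diagonal entries $|W(H)|$ and right-hand sides given by the classical Brouwer degrees $\deg(f^H,\Omega^H)$ on the fixed-point subspaces $V^H$. Since those classical degrees are intrinsically defined, any map satisfying the axioms must coincide with this construction.

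\textbf{Construction and integrality.} Since $\overline\Omega$ is compact and $\mathcal G$ is a compact Lie group, only finitely many orbit types from $\Phi_0(\mathcal G)$ occur in $\Omega$; label them $(H_1),\ldots,(H_m)$ in a linear extension of the partial order. Starting from maximal orbit types and proceeding downward, define $n_{H_i}$ by \eqref{eq:RF-0}. Integrality is the first nontrivial check: for a maximal orbit type $(H)$, the zero set $(f^H)\one(0)\cap \Omega^H$ consists of free $W(H)$-orbits (no larger isotropy survives in $\Omega^H$), so $\deg(f^H,\Omega^H)$ is automatically divisible by $|W(H)|$; for non-maximal $(H)$, the combinatorial identity built into the definition of $n(H,K)$---counting subgroups of type $(K)$ that contain $H$---guarantees that the numerator of \eqref{eq:RF-0} remains divisible by $|W(H)|$, so the induction proceeds.

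\textbf{Verification of the remaining axioms.} Existence, Additivity, Homotopy, and Normalization descend from the corresponding classical properties of $\deg(f^H,\Omega^H)$ on each stratum. Since \eqref{eq:RF-0} is an invertible linear transformation between $\{\deg(f^H,\Omega^H)\}$ and $\{n_H\}$, simultaneous vanishing, additivity, or homotopy invariance of every $n_H$ is equivalent to the same property for every classical degree; Existence then follows from the classical existence property on an appropriate $\Omega^{H_i}$. Normalization is a direct computation: for $f=\id$, every $\deg(\id^H,\Omega^H)=1$, and unwinding the recurrence from the top gives $n_{\mathcal G}=1$ and $n_H=0$ for $(H)\ne(\mathcal G)$.

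\textbf{Multiplicativity---the main obstacle.} The most delicate axiom is Multiplicativity, because it requires matching the topological product of degrees with the combinatorial product in the Burnside ring \eqref{eq:mult}. My approach is to approximate $f_1,f_2$ by maps in equivariant general position, so that each $f_i\one(0)$ is a finite union of $\mathcal G$-orbits of types $(H)$ and $(K)$, respectively. The product zero set $(f_1\times f_2)\one(0) = f_1\one(0)\times f_2\one(0)$ then decomposes, under the diagonal $\mathcal G$-action, into $\mathcal G$-orbits whose types and multiplicities are exactly those appearing in the decomposition of $\mathcal G/H\times \mathcal G/K$, namely the coefficients $m_L$ of \eqref{eq:mult}. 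The remaining work is to reconcile local orientation signs along product orbits with the product of the individual signs; this I would handle by reducing, through equivariant homotopy invariance, to the linear model $-\id$ on products of irreducible $\mathcal G$-representations (the basic-degree reduction used in Section~\ref{sec:degree-computation}), where the sign computation is direct. With Multiplicativity established, Uniqueness---as noted above---follows from the triangular invertibility of \eqref{eq:RF-0}.
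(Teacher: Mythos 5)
Your overall plan is in fact the skeleton of the standard construction, but note that the paper itself does \emph{not} prove Theorem~\ref{thm:GpropDeg}: it is quoted from \cite{AED} as an axiomatic definition, so the comparison is with the literature proof rather than with anything in this text. Within that frame, several of your steps are sound: uniqueness is indeed immediate once the Recurrence Formula \eqref{eq:RF-0} is among the axioms, since the fixed-point degrees $\deg(f^H,\Omega^H)$ are intrinsic and the system is triangular with diagonal entries $|W(H)|\neq 0$; and Existence, Additivity, Homotopy and Normalization do descend correctly from the classical Brouwer degree on each $\Omega^H$ (an $\Omega$-admissible $\mathcal G$-homotopy restricts admissibly because $\partial(\Omega^H)\subset\partial\Omega\cap V^H$, and Existence follows by the upward induction you implicitly use: $n_{H_i}\neq 0$ forces $\deg(f^K,\Omega^K)\neq 0$ for some $(K)\geq (H_i)$).

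There are, however, two genuine gaps. First, your integrality argument for non-maximal $(H)$ is not a proof: divisibility of the numerator of \eqref{eq:RF-0} by $|W(H)|$ is a geometric fact, not a ``combinatorial identity built into $n(H,K)$''. One needs equivariant transversality (regular normal approximation) to arrange that the zeros of $f^H$ in $\Omega^H$ with isotropy exactly $H$ form finitely many free $W(H)$-orbits, each contributing $|W(H)|$ times a common local index (equality of indices along an orbit uses that conjugation by $w\in W(H)$ preserves the local Brouwer degree), while a zero orbit of type $(L)>(H)$ meets $V^H$ in exactly $n(H,L)\,|W(L)|$ points, which is what makes the subtracted term in \eqref{eq:RF-0} the correct correction; without this approximation theorem the induction does not close, and you must also perturb away zeros whose orbit types have infinite Weyl group, which you do not address. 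Second, your sign reconciliation for Multiplicativity is misdirected: a general admissible pair is \emph{not} $\mathcal G$-equivariantly homotopic to a linear map, so you cannot ``reduce through equivariant homotopy invariance to the linear model $-\id$ on products of irreducibles''---if that reduction were available, every degree would be a product of basic degrees, which is false; the basic-degree factorization \eqref{eq:prod-prop} applies only to linear isomorphisms. The correct argument is local: after regular normal approximation, excise down to tubular (slice) neighborhoods of a single product orbit $\mathcal G(x_1,x_2)\subset \mathcal G x_1\times \mathcal G x_2$, identify the orbit-type multiplicities with the coefficients $m_L$ in \eqref{eq:mult} as you do, and then verify that the local index of $f_1\times f_2$ at a product zero is the product of the local indices---a statement about the classical degree in the slice, not about irreducible representations. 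With those two repairs your outline matches the proof in \cite{AED}; as written, both steps would fail.
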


The $\gdeg(f,\Omega)$ is 
 called the {\it $\mathcal G$%
-equivariant  Brouwer degree of $f$ in $\Omega$}.


\vs
{\bf (c) Computation of Brouwer equivariant degree.} Put $B(V):=\left\{  x\in V:\left|  x\right|  <1\right\}  $. For each
irreducible $\mathcal G$-representation $\mathcal{V} _{i}$, $i=0,1,2,\dots$, define
\begin{align*}
\deg_{\mathcal{V}_{i}}:=\mathcal G\mbox{\rm -}\deg(-\id,B(\mathcal{V} _{i})),
\end{align*}
and call it  
the \emph{basic degree}.

Consider a $\mathcal G$-equivariant linear isomorphism $T:V\to V$ and assume that $V$
has a $\mathcal G$-isotypic  decomposition \eqref{eq:Giso}. Then, by the
Multiplicativity property,
\begin{equation}
\label{eq:prod-prop}\mathcal G\mbox{\rm -}\deg(T,B(V))=\prod_{i=0}^{r}\mathcal G\mbox{\rm -}\deg
(T_{i},B(V_{i}))= \prod_{i=0}^{r}\prod_{\mu\in\sigma_{-}(T)} \left(
\deg_{\mathcal{V} _{i}}\right)  ^{m_{i}(\mu)}%
\end{equation}
where $T_{i}=T|_{V_{i}}$ and $\sigma_{-}(T)$ denotes the real negative
spectrum of $T$, i.e., $\sigma_{-}(T)=\left\{  \mu\in\sigma(T):\mu<0\right\}
$. \vskip.3cm

Notice that the basic degrees can be effectively computed from \eqref{eq:RF-0}: 
\begin{align*}
\deg_{\mathcal{V} _{i}}=\sum_{(H)}n_{H}(H),
\end{align*}
where 
\begin{equation}
\label{eq:bdeg-nL}n_{H}=\frac{(-1)^{\dim\mathcal{V} _{i}^{H}}- \sum
_{H<K}{n_{K}\, n(H,K)\, \left|  W(K)\right|  }}{\left|  W(H)\right|  }.
\end{equation}


\begin{thebibliography}{99}     





\bibitem{AS}\textrm{V.I. Arnold and M.B. Sevryuk}, Oscillations and bifurcations in reversible systems, in Sagdeev, R. Z. (ed.), Nonlinear Phenomena in Plasma Physics and Hydrodynamics, Mir, Moscow (1986), 31--64.

\bibitem {survey}\textrm{Z. Balanov, W. Krawcewicz, S. Rybicki and
H.Steinlein}, A short treatise on the equivariant degree theory and its
applications, J. Fixed Point Theory Appl., {\bf 8} (2010), 1--74.
    
\bibitem {AED} \textrm{Z. Balanov, W. Krawcewicz and H. Steinlein},  Applied
Equivariant Degree. AIMS Series on Differential Equations \& Dynamical
Systems, Vol.1, 2006.

\bibitem {BKLN}\textrm{Z. Balanov, W. Krawcewicz, Z. Li and M. Nguyen}, Multiple
solutions to implicit symmetric boundary value problems for second order
ordinary differential equations (ODEs): Equivariant Degree Approach, Symmetry,
{\bf 4} (2013), 287-312.

\bibitem {BLN}\textrm{Z. Balanov, W. Krawcewicz and M. Nguyen}, Multiple solutions to
symmetric boundary value problems for second order ODEs: equivariant degree
approach, Nonlinear Anal., {\bf 94} (2014), 45-64.

\bibitem{BW} \textrm{Z. Balanov and H.P. Wu}, Bifurcation of space periodic solutions in symmetric reversible FDEs, Differential Integral Equations, {\bf 30} (2017), 289-328.


                                                                                     

\bibitem {DKLP}\textrm{M. Dabkowski, W. Krawcewicz, Y. Lv and H-P. Wu}, Multiple Periodic
Solutions for $\Gamma$-Symmetric Newtonian Systems, J. Differential Equations, {\bf 263} (2017), 6684-6730.
















\bibitem {GolSchSt}\textrm{M. Golubitsky, I.N. Stewart and D.G. Schaeffer}, Singularities and Groups in Bifurcation Theory, Vol. II, Applied
Mathematical Sciences 69, Springer-Verlag, New York-Berlin, 1988.

\bibitem{GolStew}\textrm{M. Golubitsky and I. Stewart}, The Symmetry Perspective, Birkh\"auser, Basel-Boston-Berlin, 2002.

\bibitem{Goursat} \textrm{\'E. Goursat}, Sur les substitutions orthogonales et les divisions r\'eguli\`eres de l'espace, Annales Scientifiques de l'\'Ecole Normale Sup\'erieure, {\bf 6} (1889), 9-102.

\bibitem{GuoKharitonovCheng}\textrm{K. Gu, V.L. Kharitonov and J. Chen}, Systems with commensurate delays, In: Stability of Time-Delay Systems. Control Engineering, Birkh\"auser, Boston, MA, 2003.

\bibitem{IV-book}\textrm{J. Ize and A. Vignoli},  {Equivariant Degree Theory}, De Gruyter Series in Nonlinear Analysis and Applications, Vol. 8, De Gruyter, Berlin-Boston, 2003.

\bibitem{KW}\textrm{W. Krawcewicz and J. Wu}, Theory of Degrees with Applications to Bifurcations and
Differential Equations, John Wiley \& Sons, Inc., 1997.

\bibitem {KWS} \textrm{W. Krawcewicz, H.P. Wu and S. Yu,} Periodic solutions in reversible second order autonomous systems with symmetries,
J. Nonlinear Convex Anal., {\bf 18} (2017), 1393?1419.

\bibitem{KB}\textrm{A. Kushkuley, and Z. Balanov}, Geometric Methods in Degree Theory for Equivariant Maps, Lecture Notes in Math., Vol. 1632, 
Springer-Verlag, Berlin, 1996.

\bibitem{Lamb-Roberts}\textrm{J.S.W. Lamb and J.A. Roberts}, Time-reversal symmetry in dynamical systems: a survey, {\it Phys. D}, {\bf 112} 
(1998), 1--39.









\bibitem {Pin}H-P. Wu, GAP program for the computations of the Burnside ring
$A(\Gamma\times O(2))$, 
\texttt
{https://github.com/psistwu/equideg}
developed at University of Texas at Dallas, 2016.



\end{thebibliography}
\end{document}